\newtheorem {Lemma}{Lemma}[section]
\newtheorem {Theorem} {Theorem}[section]
\newtheorem {Corollary}{Corollary}[section]
\numberwithin{equation}{section}
\begin{document}

\title{ Distance spectral radius of uniform hypergraphs}

\author{Hongying Lin\footnote{ E-mail:
lhongying0908@126.com}, Bo Zhou\footnote{Corresponding author. E-mail:
zhoubo@scnu.edu.cn}\\[1.5mm]
School of Mathematical sciences, South China Normal University, \\
Guangzhou 510631, P. R. China }

\date{}
\maketitle

\begin{abstract}
We study the effect of three types of graft transformations to increase or decrease the distance spectral radius of uniform hypergraphs, and  we determined the unique $k$-uniform hypertrees with  maximum, second  maximum, minimum and second minimum  distance spectral radius, respectively.
\\ \\
{\bf AMS classification:} 05C50, 05C65, 15A18\\ \\
{\bf Key words:}  distance spectral radius,  uniform hypergraph, uniform hypertree,  distance matrix, graft transformation
\end{abstract}

\section{Introduction}

A hypergraph $G$ consists of a vertex set $V(G)$ and an edge set $E(G)$,
where $V(G)$ is nonempty, and each edge $e\in E(G)$ is a nonempty subset of $V(G)$, see \cite{Be}. The order of $G$ is
$|V(G)|$. 
For an integer $k\ge 2$, we say that a hypergraph $G$ is $k$-uniform if every edge has size $k$.
A (simple)
graph is a $2$-uniform hypergraph. The degree of a vertex $v$ in $G$, denoted by $d_G(v)$, is the
number of edges of $G$ which contain $v$. 

For $u, v\in V(G)$, a walk  from $u$ to $v$ in $G$ is defined to be a sequence of  vertices and edges
$(v_0,e_1,v_1,\dots,v_{p-1},e_p, v_p)$ with $v_0=u$ and $v_p=v$
edge $e_i$ contains vertices
$v_{i-1}$ and $v_i$, and  $v_{i-1}\ne v_i$ for $i=1,\dots,p$. The value $p$ is the length of this walk.
A path is a walk with all $v_i$ distinct and all $e_i$ distinct.
A cycle is a walk containing at least
two edges, all $e_i$ are distinct and all $v_i$ are distinct except $v_0=v_s$.
A vertex $u\in V(G)$ is viewed as a path (from $u$ to $u$) of length $0$.
If there is a path from $u$ to $v$ for any $u,v\in V(G)$, then we say that $G$ is connected.
A hypertree is a connected hypergraph with no cycles. Note that a
$k$-uniform hypertree with $m$ edges always has order $1+(k-1)m$.

Let $G$ be a $k$-uniform hypergraph with $V(G)=\{v_1,\dots ,v_n\}$ and edge set $E(G)$, where $2\le k\le n$.
For $u,v\in V(G)$, the distance between $u$ and $v$ is the length of a shortest path from $u$ to $v$ in $G$, denoted by  $d_G(u,v)$.  In particular, $d_G(u,u)=0$.
The diameter of $G$ is the maximum distance between all vertex pairs of $G$.
The distance matrix of $G$ is the $n\times n$ matrix
$D(G)=(d_G(u,v))_{u,v\in V(G)}$. The eigenvalues of $D(G)$ are called the distance eigenvalues of $G$. Since $D(G)$ is real and
symmetric, the distance eigenvalues of $G$ are real. The distance
spectral radius of $G$, denoted by $\rho(G)$, is the largest
distance eigenvalue of $G$. Note that  $D(G)$ is a
irreducible nonnegative matrix. The Perron-Frobenius theorem implies that $\rho(G)$
is simple, positive, and there is a unique positive unit eigenvector $x(G)$
corresponding to $\rho(G)$, which is called the distance Perron
vector of $G$, denoted by $x(G)$.

The study of distance eigenvalues of $2$-uniform hypergraphs (ordinary graphs) dates back to the
classical work of Graham and Pollack \cite{GP}, Graham and Lov\'asz
\cite{GL} and Edelberg et al.~\cite{EGG}.
Ruzieh and Powers \cite{SuPo90} showed that among  connected
$2$-uniform hypergraphs of order $n$, the path $P_n$ is the unique graph with
maximum distance spectral radius.
Stevanovi\'c and Ili\'c \cite{SI}
showed that among  trees of order $n$, the star $S_n$ is the unique tree with
minimum distance spectral radius.
Nath and Paul \cite{NP} determined the unique trees with maximum  distance spectral radius
among trees with fixed  matching number. For more details on distance eigenvalues and especially on distance spectral radius of $2$-uniform hypergraphs, one may refer to the recent survey of Aouchiche and Hansen \cite{AH-2} and referees therein. Sivasubramanian \cite{Si} gave a formula for the inverse of a few $q$-analogs of the distance matrix of a $3$-uniform hypertree.


For a $k$-uniform hypertree $G$  with $V(G)=\{v_1, \dots, v_n\}$,  if $E(G)=\{e_1, \dots, $ $ e_m\}$, where $e_i=\{v_{(i-1)(k-1)+1}, \dots, v_{(i-1)(k-1)+k}\}$ for $i=1, \dots, m$, then we call $G$ a $k$-uniform loose path, denoted by $P_{n,k}$.

%
%

For a $k$-uniform hypertree $G$ of order $n$, if there is a disjoint partition of the vertex set $V(G)=\{u\}\cup V_1\cup\dots \cup V_m$ such that  $|V_1|=\dots=|V_m|=k-1$, and $E(G)=\{\{u\}\cup V_i: 1\leq i\leq m\}$, then we call $G$ is a hyperstar (with center $u$), denoted by $S_{n,k}$. 
In particular, $S_{1,k}$ is a hypergraph with a single vertex and  $S_{k,k}$ is a hypergraph with a single edge.

In this paper, we study the effect of three types of graft transformations to increase or decrease the distance spectral radius of $k$-uniform hypergraphs. As applications, we show that $P_{n,k}$ and $S_{n,k}$ are  the unique  $k$-uniform hypertrees with  maximum and minimum distance spectral radius, respectively, and we also determine the unique  $k$-uniform hypertrees with second maximum and second minimum distance spectral radius, respectively.

\section{Preliminaries}

Let $G$ be a $k$-uniform hypergraph with $V(G)=\{v_1,\dots,v_n\}$. A column
vector $x=(x_{v_1},\dots, x_{v_n})^\top\in \mathbb{R}^n$ can be
considered as a function defined on $V(G)$ which maps vertex $v_i$
to $x_{v_i}$, i.e., $x(v_i)=x_{v_i}$ for $i=1,\dots,n$. Then
\begin{equation*}
x^\top D(G)x=\sum_{\{u,v\}\subseteq V(G)} 2d_G(u,v)x_ux_v,
\end{equation*}
and
$\rho$ is a distance eigenvalue with corresponding eigenvector
$x$ if and only if $x \neq 0$ and for each $u\in V(G)$,
\begin{equation*}
\rho
x_u=\sum_{v\in V(G)}d_G(u,v)x_v.
\end{equation*}
The above  equation is called the $(\rho,x)$-eigenequation of $G$ (at $u$).
For a unit column vector $x\in\mathbb{R}^n$ with at least one nonnegative entry, by Rayleigh's principle, we
have
\begin{equation*}
\rho(G)\ge x^{\top}D(G)x
\end{equation*}
with equality if and only if $x=x(G)$.

For $u\in V(G)$, let $G-u$ be the sub-hypergraph of $G$ obtained by deleting $u$ and all edges containing $u$.
We remark that in the literature this is sometimes denoted by strongly deleting the vertex $u$.
For $e\in E(G)$, let  $G-e$ be the sub-hypergraph of $G$ obtained by deleting $e$.

For $X\subseteq V(G)$ with $X\ne\emptyset$, let $G[X]$ be the sub-hypergraph induced by $X$, i.e., $G[X]$ has vertex set $X$ and edge
set $\{e\cap X: e\in E(G)\}$, 
and  let $\sigma_G(X)$ be the sum of the entries of the distance Perron vector of $G$
corresponding to the vertices in $X$.

Let $G$ be a $k$-uniform hypergraph with $u,v\in V(G)$ and $e_1,\dots,e_r\in E(G)$  such that $u\notin e_i$ and $v\in e_i$ for $1\leq i\leq r$.
Let  $e'_i=(e_i\setminus \{v\})\cup\{u\}$ for $1\leq i\leq r$.
Let $G'$ be the hypergraph with $V(G')=V(G)$ and  $E(G')=(E(G)\setminus \{e_1,\dots, e_r\} )\cup\{e'_1,\dots,e'_r\}$.
Then we say that $G'$ is obtained from $G$ by moving edges $e_1,\dots,e_r$ from $v$ to $u$.

A path $P=( v_0,e_1,v_1,\dots,v_{p-1},e_p, v_p)$ in a $k$-uniform hypergraph $G$ is called a pendant path at $v_0$, if $d_G(v_0)\ge 2$,  $d_G(v_i)=2$ for $1\leq i\leq p-1$, $d_G(v)=1$ for $v\in e_i\setminus\{v_{i-1}, v_i\}$ with $1\leq i\leq p$,
and $d_G(v_p)=1$.
An edge $e=\{w_1,\dots,w_k\}$ in $G$ is called a pendant edge at $w_1$ if $d_G(w_1)\geq 2$, $d_G(w_i)=1$ for $2\leq i\leq k$.

If $P$ is a pendant path of a hypergraph $G$ at $u$, we say $G$ is obtained from $H$ by attaching a pendant path $P$ at $u$ with $H=G[V(G)\setminus(V(P)\setminus\{u\})]$. If $P$ is a pendant path of length one of $G$ at $u$, then we also say that $G$ is obtained from $H$ by attaching a pendant edge at $u$.

A component of a hypergraph $G$
is a maximal connected sub-hypergraph of $G$.

For vertex-disjoint hypergraphs $G_1$ and $G_2$, let $G_1\cup G_2$ the be union of $G_1$ and $G_2$, i.e, $V(G_1\cup G_2)=V(G_1)\cup V(G_2)$ and $E(G_1\cup G_2)=E(G_1)\cup E(G_2)$.

\section{The effect of graft transformations on distance spectral radius}

In this section we propose three types of graft transformations for a $k$-uniform hypergraph, and  consider the effect of them to increase or decrease the distance spectral radius.

Let $G$ be a connected $k$-uniform hypergraph with $ |E(G)|\geq 1$. For $u\in V(G)$, and  positive integers $p$ and $q$, let  $G_{u}(p,q)$ be a $k$-uniform hypergraph obtained from $G$ by attaching two pendant paths of lengths $p$ and $q$ at $u$, respectively,  and  $G_{u}(p,0)$ be a $k$-uniform hypergraph obtained from $G$ by attaching a pendant path of length $p$ at $u$.

\begin{Theorem} \label{moving1} Let $G$ be a connected $k$-uniform hypergraph  $| E(G)|\geq 1$ and  $u\in V(G)$.
For integers $ p\geq q\geq 1$, $\rho(G_u(p,q))<\rho(G_u(p+1,q-1))$.
\end{Theorem}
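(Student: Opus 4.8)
The plan is to use the Rayleigh-quotient characterization of $\rho$ together with the distance Perron vector of $G_u(p,q)$ as a test vector for $G_u(p+1,q-1)$, and to show the resulting quadratic form strictly increases. Write $H = G_u(p,q)$ and $H' = G_u(p+1,q-1)$; note both have the same vertex set and the same order $n$. Let $x = x(H)$ be the distance Perron vector of $H$, normalized to be a positive unit vector. By Rayleigh's principle, $\rho(H') \ge x^\top D(H') x$, with equality only if $x = x(H')$. So it suffices to prove $x^\top D(H') x > x^\top D(H) x = \rho(H)$, i.e. that
\begin{equation*}
x^\top \bigl(D(H') - D(H)\bigr) x > 0.
\end{equation*}
The first step is therefore to understand the entrywise difference $D(H') - D(H)$. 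Label the vertices on the two pendant paths at $u$: the longer path $P$ has internal ``segments'' and ends at a vertex (and the last edge carries $k-1$ pendant vertices of degree $1$), similarly for the shorter path $Q$ of length $q$. Moving from $H$ to $H'$ amounts to detaching the last edge of $Q$ (together with its $k-1$ degree-one vertices, or more precisely the terminal portion of $Q$) and re-attaching it at the far end of $P$. Distances change only for pairs involving the relocated vertices; all other pairwise distances are unchanged.

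The second step is the bookkeeping of which distances go up and which go down. For a vertex $w$ lying on (or hanging off) the shorter path $Q$ at ``depth'' $j$ from $u$, relocating the terminal block to the end of $P$ changes $d(\cdot, w)$ for every vertex; the key point is that the sum $\sum_v d_{H'}(v,w)x_v - \sum_v d_H(v,w)x_v$ can be organized by comparing the contribution of vertices on the $P$-side versus the $Q$-side versus $V(G)$. The standard trick here is to pair up the vertices symmetrically: for vertices in $V(G)$ and on the common stub, distances to the relocated block increase by a fixed amount; distances from the relocated block to the remaining tail of $Q$ change in a controlled way. One then rewrites $x^\top(D(H')-D(H))x$ as a sum of terms of the form $(\text{positive coefficient}) \cdot (\sigma\text{-type partial sums of } x)$, using the notation $\sigma_H(X)$ from the Preliminaries. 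Because $x$ is a \emph{positive} vector, every such $\sigma$-term is strictly positive, and if the net coefficient pattern is nonnegative with at least one strictly positive contribution, we are done. The inequality $p \ge q \ge 1$ is exactly what guarantees the coefficient pattern has the right sign: lengthening the already-longer path and shortening the shorter one moves the relocated mass ``farther from the bulk on average,'' because the bulk of $x$ (by symmetry/monotonicity of the Perron vector, entries decay as one moves out a pendant path) sits nearer $u$ and on the $G$-side.

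The main obstacle I expect is the second step: setting up the difference $D(H')-D(H)$ cleanly enough that the quadratic form visibly decomposes into manifestly positive pieces, rather than a signed sum requiring delicate cancellation. A robust way to handle this is to reduce to the case $q=1$ (relocating a single pendant edge) by an inductive/telescoping argument — $\rho(G_u(p,q)) < \rho(G_u(p+1,q-1))$ can be chained if one proves the one-step move strictly increases $\rho$ whenever it lengthens the longer branch — or, more directly, to exploit monotonicity of the Perron entries along pendant paths: one shows $x_w$ is nonincreasing as $w$ recedes from $u$ along $Q$, and nonincreasing along $P$, and uses this together with the fact that the total weight on the $G$-side plus the long path strictly exceeds the weight that would balance the move. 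Concretely, after collecting terms one should arrive at an expression like $x^\top(D(H')-D(H))x = 2\bigl(\sigma_{H'}(\text{bulk}) - \sigma_{H'}(\text{short tail})\bigr)\cdot(\text{positive}) + (\text{nonneg})$, and the bulk-versus-tail comparison is where the Perron-vector monotonicity and the hypothesis $p\ge q$ are both consumed. Once that single inequality $\sigma(\text{bulk}) > \sigma(\text{short tail})$ is established, strictness of the whole bound — and hence $\rho(G_u(p,q)) < \rho(G_u(p+1,q-1))$ — follows immediately from Rayleigh's principle and the positivity of $x$.
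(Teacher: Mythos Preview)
Your Rayleigh-quotient framework matches the paper's, but the execution diverges at the decisive step and leaves a real gap. You propose to realize $H'=G_u(p+1,q-1)$ by relocating the terminal edge of $Q$ to the far end of $P$, and then assert that the hypothesis $p\ge q$ forces the sign of $x^\top(D(H')-D(H))x$ via a comparison of the form $\sigma_H(\text{bulk})>\sigma_H(\text{short tail})$, justified by ``monotonicity of the Perron entries along pendant paths.'' That comparison is never proved, and your supporting intuition is in fact backwards: for the \emph{distance} matrix the Perron entries \emph{increase} as one moves outward along a pendant path (end vertices have larger row sums), not decay. With your edge-relocation the quadratic form reduces to something like $2\sigma_H(S)\bigl[\rho(H)(x_{u_p}-x_{v_{q-1}})-(p+q-1)\sigma_H(S)\bigr]$, and establishing positivity of the bracket is a genuinely nontrivial quantitative statement about $x$ that you do not supply.

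The paper sidesteps exactly this difficulty with two ideas absent from your plan. First, it uses a different realization of $H'$: instead of moving a pendant edge, it moves all edges of $G$ incident with $u$ from $u$ to $v_1$ (the first vertex on $Q$); this also produces $G_u(p+1,q-1)$ but makes every changed distance shift by exactly $\pm1$, so the whole computation collapses to the single comparison $\sigma_H(V(P))$ versus $\sigma_H(V(Q))$. Second --- and this is the key trick --- the paper does \emph{not} attempt to deduce $\sigma_H(V(P))\ge\sigma_H(V(Q))$ from $p\ge q$. It instead splits on which of the two is larger: in one case the move to $v_1$ gives $\rho(G_u(p,q))<\rho(G_u(p+1,q-1))$, in the other the symmetric move to $u_1$ gives $\rho(G_u(p,q))<\rho(G_u(p-1,q+1))$. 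Thus only the weaker statement
\[
\rho(G_u(p,q))<\max\{\rho(G_u(p+1,q-1)),\,\rho(G_u(p-1,q+1))\}
\]
is obtained directly, and the one-sided conclusion is then extracted by a short iteration-to-contradiction: if the wrong alternative held for some $p>q$, repeatedly applying the weak inequality toward the balanced pair would yield a strict increase that loops back on itself. This case-split-plus-iteration device is precisely what replaces the unproved Perron-vector comparison your proposal relies on.
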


\begin{proof} Let $H=G_{u}(p,q)$.
Let $P=( u,e_1,u_1,\dots, u_{p-1},e_p,u_p)$ and $Q=(u,e'_1,v_1,$ $ \dots,v_{q-1},e'_q,v_q)$ be the pendant paths of $H$ at $u$ of lengths $p$ and $q$, respectively.
Let $e_{u}^1,\dots,e_{u}^{t}$ be all the edges of $G$ containing $u$, where $t\geq 1$.

Suppose that $\sigma_H (V(P))\geq \sigma_H (V(Q))$. Let $H'$ be the hypergraph obtained from $H$ by moving edges $e_{u}^1,\dots,e_{u}^{t}$ from $u$ to $v_1$.
It is easily seen that $H'\cong G_u(p+1,q-1)$.
As we pass from $H$ to $H'$, the distance between a vertex of $V(G)\setminus \{u\}$ and a vertex of $V(P)$ is increased by $1$,  the distance between a vertex of $V(G)\setminus \{u\}$ and $V(Q)\setminus (e'_1\setminus\{v_1\})$ is decreased by $1$,
and the distance between any other vertex
pair remains unchanged. Let $x=x(H)$. Then
\begin{eqnarray*}
&&\frac{1}{2}(\rho(H')- \rho(H))\\
&\ge&\frac{1}{2}x^{\top}(D(H')-D(H))x\\
&=&\sigma_H(V(G)\setminus \{u\})\left[\sigma_H(V(P))- (\sigma_H(V(Q))-\sigma_H(e'_1\setminus\{v_1\}))\right]\\
&=&\sigma_H(V(G)\setminus \{u\})\left[\sigma_H(V(P))-\sigma_H(V(Q))+\sigma_H(e'_1\setminus\{v_1\})\right]\\
&\ge &\sigma_H(V(G)\setminus \{u\})\sigma_H(e'_1\setminus \{v_1\})\\
&>&0,
\end{eqnarray*}
and thus $\rho(H)<\rho(H')$.

Suppose that $\sigma_H (V(P))<\sigma_H (V(Q))$.
Let $H''$ be the hypergraph obtained from $H$ by moving edges $e_{u}^1,\dots,e_{u}^{t}$ from $u$ to $u_1$.
It is easily seen that $H''\cong G_{u}(p-1,q+1)$.
By similar argument as above, we have that $\rho(H)<\rho(H'')$.

Thus we have proved that
\begin{equation}
\label{two}
\rho(G_u(p,q))<\max\{\rho(G_u(p-1,q+1)), \rho(G_u(p+1,q-1))\}.
\end{equation}
If $p=q$, then the result follows. Suppose that
$p>q$. If $\rho(G_u(p,q))<\rho(G_u(p-1,q+1))$, then by using (\ref{two}) repeatedly, we have
\begin{eqnarray*}
\rho(G_u(p,q))&\le &\rho\left(G_u\left(\left\lceil\frac{p+q+1}{2}\right\rceil, \left\lfloor\frac{p+q-1}{2}\right\rfloor\right)\right)\\
&<&\rho\left(G_u\left(\left\lfloor\frac{p+q-1}{2}\right\rfloor, \left\lceil\frac{p+q+1}{2}\right\rceil\right)\right),
\end{eqnarray*}
which is impossible.
Thus $\rho(G_u(p,q))<\rho(G_u(p+1,q-1))$.
\end{proof}

Let $G$ be a connected $k$-uniform hypergraph with $u,v\in e\in E(G)$.  For positive integers $p$ and  $q$, let   $G_{u,v}(p,q)$ be a $k$-uniform hypergraph obtained from $G$ by attaching a pendant path of length $p$ at $u$ and a pendant path   of length $q$ at $v$, and $G_{u,v}(p,0)$ be a $k$-uniform hypergraph obtained from $G$ by attaching a pendant path of length $p$ at $u$. Let $G_{u,v}(0,q)=G_{v,u}(q, 0)$.

\begin{Theorem} \label{moving2} Let $G$ be a connected $k$-uniform hypergraph  $|E(G)|\geq 2$ and  $u,v\in e\in E(G)$.
Suppose that $G-e$ consists of $k$  components.
For integers $ p,q\geq 1$, $\rho(G_{u,v}(p,q))<\rho(G_{u,v}(p-1,q+1))$ or $\rho(G_{u,v}(p,q))<\rho(G_{u,v}(p+1,q-1))$.
\end{Theorem}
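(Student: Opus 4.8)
The plan is to mimic the proof of Theorem~\ref{moving1}, but now the two pendant paths hang at two \emph{different} vertices $u$ and $v$ that lie in a common edge $e$. Write $H=G_{u,v}(p,q)$, let $P=(u,f_1,u_1,\dots,u_p)$ and $Q=(v,g_1,v_1,\dots,v_q)$ be the two pendant paths, and let $x=x(H)$ be the distance Perron vector. The key structural fact we are given is that $G-e$ has exactly $k$ components; since $|e|=k$, each component contains exactly one vertex of $e$. Let $A$ be the component containing $u$ (together with $P$ attached) and $B$ the component containing $v$ (together with $Q$ attached), and let $C$ be the union of the remaining $k-2$ components of $G-e$ together with the $k-2$ shared vertices of $e$ other than $u,v$. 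The point of the hypothesis ``$k$ components'' is precisely that moving the path $P$ from $u$ to $v$ (or $Q$ from $v$ to $u$) only changes distances in a controlled, sign-definite way, because $u$ and $v$ are ``interchangeable'' endpoints of $e$: every shortest path between a vertex outside $e$'s structure and a vertex of $P$ either already routes through the whole of $e$ or can be rerouted at cost exactly $\pm 1$.

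First I would set up the distance bookkeeping for the transformation $H\mapsto H'$, where $H'$ is obtained from $H$ by detaching the pendant path $P$ at $u$ and reattaching it at $v$ (so $H'\cong G_{u,v}(0,\,?)$-type object — more precisely one checks $H'\cong G_{u,v}(p-1,q+1)$ after relabelling, since sliding a loose path of length $p$ across one edge of $e$ shortens/lengthens appropriately; this is the same combinatorial identification used for $H'\cong G_u(p+1,q-1)$ in Theorem~\ref{moving1}). As we pass from $H$ to $H'$: distances between $V(P)$-vertices and vertices in $B\cup C$ (the part of $H$ ``on the $v$ side'' or in the other components) change by a fixed amount, while distances within $A$ are unchanged and distances between $V(P)$ and the rest of $A$ are essentially unchanged. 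Then, exactly as in the previous proof,
\begin{equation*}
\tfrac12\bigl(\rho(H')-\rho(H)\bigr)\ge \tfrac12 x^\top\bigl(D(H')-D(H)\bigr)x
\end{equation*}
collects into a product of a $\sigma_H$-sum over the vertices whose distances to $V(P)$ decreased, times $\bigl[\sigma_H(V(P))-(\text{something smaller})\bigr]$, plus possibly a genuinely positive leftover term coming from the vertices of $e$ itself.

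Next I would run the symmetric comparison: let $H''$ be obtained by moving $Q$ from $v$ to $u$, giving $H''\cong G_{u,v}(p+1,q-1)$, and derive the analogous inequality
\begin{equation*}
\tfrac12\bigl(\rho(H'')-\rho(H)\bigr)\ge \sigma_H(\text{decreasing side})\bigl[\sigma_H(V(Q))-(\text{smaller})\bigr].
\end{equation*}
Now I split on whether $\sigma_H(V(P))\ge\sigma_H(V(Q))$ or not. In the first case the $H'$ inequality is strict (the bracket is $\ge 0$ and the leftover $e$-term is $>0$), giving $\rho(G_{u,v}(p-1,q+1))>\rho(G_{u,v}(p,q))$; in the second case the $H''$ inequality is strict, giving $\rho(G_{u,v}(p+1,q-1))>\rho(G_{u,v}(p,q))$. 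Either way the disjunction in the statement holds. Unlike Theorem~\ref{moving1}, we cannot iterate to pin down \emph{which} disjunct holds, because $u$ and $v$ need not be symmetric in $G$, and that is exactly why the conclusion is stated as an ``or''.

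The main obstacle, and the place the ``$G-e$ has $k$ components'' hypothesis does its real work, is verifying that the distance changes under $H\mapsto H'$ really are sign-definite and that $H'$ is genuinely isomorphic to $G_{u,v}(p-1,q+1)$. If $G-e$ had fewer than $k$ components, then some two vertices of $e$ would be joined by a path avoiding $e$, and rerouting a pendant path across $e$ could \emph{shorten} some distances and \emph{lengthen} others, destroying the sign-definiteness and hence the Rayleigh-quotient argument. So the careful step is: fix $w\in V(H)\setminus V(P)$ and a vertex $u_i\in V(P)$, and show $d_{H'}(w,u_i)-d_H(w,u_i)$ depends only on which ``side'' ($A$, $B$, $C$, or inside $e$) the vertex $w$ lies on, using that any shortest path from $w$ to $u_i$ in either hypergraph must pass through $e$ and in particular through $u$ (in $H$) or $v$ (in $H'$). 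Once that case analysis is done cleanly — it is the hypergraph analogue of the standard ``cut vertex'' distance lemma — the rest is a verbatim repeat of the computation in Theorem~\ref{moving1}, with one extra strictly positive term to guarantee the strict inequality.
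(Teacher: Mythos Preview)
Your transformation is wrong. Detaching $P$ from $u$ and reattaching it at $v$ produces a hypergraph with two pendant paths (of lengths $p$ and $q$) at $v$ and none at $u$; this is not isomorphic to $G_{u,v}(p-1,q+1)$. In fact Theorem~\ref{moving1} does not move the pendant paths either --- it moves the body edges of $G$ from $u$ to $v_1$ --- and the correct move here is the analogous one: slide all of $G$ one step along the first edge $e'_1=\{v,v_1,w''_1,\dots,w''_{k-2}\}$ of $Q$, by moving the edges of each component $G_w$ of $G-e$ from $w\in e$ to the matching vertex of $e'_1$ (so $G_u$ goes to $v$, $G_v$ goes to $v_1$, and $G_{w_i}$ goes to $w''_i$). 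That operation yields $H'\cong G_{u,v}(p+1,q-1)$, and it is the one to perform in the case $\sigma_H(V(P))\ge\sigma_H(V(Q))$ (you also have the direction reversed).

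Even granting the correct transformation, the distance bookkeeping is not the clean ``$\pm 1$ depending on side'' picture you sketch. For $a\in V(G_{w_i})\setminus\{w_i\}$, distances to $w_i$ and to $w''_i$ change by $+2$ and $-2$, and once you collect terms in $x^\top(D(H')-D(H))x$ the contribution of each such component carries a factor $x_v+x_{w_i}-x_{w''_i}$, which is not obviously positive. The paper establishes positivity by a separate eigenequation argument at $v,\,w_i,\,w''_i$, using $d_H(v,\cdot)+d_H(w_i,\cdot)\ge d_H(w''_i,\cdot)$ to get $(\rho(H)+1)(x_v+x_{w_i}-x_{w''_i})>0$. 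Finally, the Rayleigh bound so obtained may still be only $\ge 0$ (namely when every $G_{w_i}$ and $G_v$ is trivial); ruling out equality requires a further contradiction via the eigenequation at $u$ together with the hypothesis $|E(G)|\ge 2$. None of these steps appears in your sketch.
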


\begin{proof} Let $H=G_{u,v}(p,q)$.
Let $P=(u,e_1,u_1,\dots, u_{p-1},e_p,u_p)$ and
$Q=(v,e'_1,v_1,\dots,v_{q-1},e'_q,v_q)$ be the pendant paths of $H$ at $u$ and
 $v$ of lengths $p$ and $q$, respectively.
Let $e=\{u,v,w_1,\dots, w_{k-2}\}$, $e_1=\{u,u_1,w'_1,\dots, w'_{k-2}\}$ and $e_1'=\{v,v_1,w''_1,\dots, w''_{k-2}\}$.
For $w\in e$, let $G_w$ be the component of $G-e$ containing $w$, and let $t_w$ be the number of edges of $G-e$ containing $w$.
For $w\in e$ with $t_w\ge 1$, let $e_w^1,\dots,e_w^{t_w} $ be the $t_w$ edges of $G-e$ containing $w$. Note that $e_w^1,\dots,e_w^{t_w} \in E(G_w)$ for  $w\in e$. 

Suppose that $\sigma_H (V(P))\geq \sigma_H (V(Q))$. Let $H'$ be the hypergraph obtained from $H$ by moving edges $e_{w_i}^1,\dots,e_{w_i}^{t_{w_i}}$ from $w_i$ to $w''_i$ for all $i$ with $1\leq i \leq k-2$ and $t_i\geq 1$ if $k\geq3$,
moving edges $e_u^1,\dots,e_u^{t_u}$ from $u$ to $v$ if $t_u\geq 1$, and moving edges $e_v^1,\dots,e_v^{t_v}$ from $v$ to $v_1$ if $t_v\geq 1$.
It is easily seen that $H'\cong G_{u,v}(p+1,q-1)$.

For $k\geq 3$, let $I=\{i: 1\leq i\leq k-2,~t_{w_i}\geq 1\}$.
As we pass from $H$ to $H'$,
the distance between a vertex of $V(G_u)\setminus \{u\}$ and a vertex of $V(P)$ is increased by $1$,
and the distance between a vertex of $V(G_u)\setminus \{u\}$ and a vertex of $V(Q)$ is decreased by $1$,
the distance between a vertex of $V(G_v)\setminus \{v\}$ and a vertex of $V(P)\cup (e\setminus\{u\})$ is increased by $1$,
the distance between a vertex of $V(G_v)\setminus \{v\}$ and a vertex of $V(Q)\setminus (e'_1\setminus\{v_1\})$ is decreased by $1$,
for $i\in  I$ with $k\geq 3$, the distance between a vertex of $V(G_{w_i})\setminus \{w_i\}$ and a vertex of $V(P)\cup(e\setminus\{w_i,u,v\})$ is increased by $1$,
the distance between a vertex of $V(G_{w_i})\setminus \{w_i\}$ and $w_i$ is increased by $2$,
the distance between a vertex of $V(G_{w_i})\setminus \{w_i\}$ and a vertex of $V(Q)\setminus \{v,w''_i\}$ is decreased by $1$, and
the distance between a vertex of $V(G_{w_i})\setminus \{w_i\}$ and $w''_i$ is decreased by $2$, and
the distance between any other vertex
pair remains unchanged. Let $x=x(H)$. Note that $\sigma_H (e)-x_u\geq x_v$ and $\sigma_H(e'_1)-x_{v_1}\geq x_v$. Then
\begin{eqnarray*}
&&\frac{1}{2}(\rho(H')- \rho(H))\\
&\ge&\frac{1}{2}x^{\top}(D(H')-D(H))x\\
%
%
&=&\sigma_H(V(G_u)\setminus\{u\})\left(\sigma_H(V(P))-\sigma_H(V(Q))\right)\\
&&+\sigma_H(V(G_v)\setminus\{v\})\left[\sigma_H(V(P))+(\sigma_H(e)-x_u)\right.\\
&&\left. -(\sigma_H(V(Q))-(\sigma_H(e'_1)-x_{v_1}))\right]\\
&&+\sum_{i\in I}\sigma_H(V(G_{w_i})\setminus\{w_i\})\left[\sigma_H(V(P))+(\sigma_H(e)-x_{w_i}-x_u-x_v)+2x_{w_i}\right.\\
&&\left.-(\sigma_H(V(Q))-x_v-x_{w''_i})-2x_{w''_i} \right]\\
&= &\sigma_H(V(G_u)\setminus\{u\})\left(\sigma_H(V(P))-\sigma_H(V(Q))\right )\\
&&+\sigma_H(V(G_v)\setminus\{v\})\left[\sigma_H(V(P))-\sigma_H(V(Q))\right.\\
&&\left. +(\sigma_H (e)-x_u)+(\sigma_H(e'_1)-x_{v_1})\right]\\
&&+\sum_{i\in I}\sigma_H(V(G_{w_i})\setminus\{w_i\})\left[\sigma_H(V(P))-\sigma_H(V(Q))\right.\\
&& \left.+(\sigma_H(e)-x_u)+x_{w_i}-x_{w''_i} \right]\\
&\geq &\sigma_H(V(G_u)\setminus\{u\})\left(\sigma_H(V(P))-\sigma_H(V(Q))\right )\\
&&+\sigma_H(V(G_v)\setminus\{v\})\left(\sigma_H(V(P))-\sigma_H(V(Q))+2x_v\right)\\
&&+\sum_{i\in I}\sigma_H(V(G_{w_i})\setminus\{w_i\})\left(\sigma_H(V(P))-\sigma_H(V(Q))+x_v+x_{w_i}-x_{w''_i} \right)\\
&\geq &2\sigma_H(V(G_v)\setminus\{v\})x_v+\sum_{i\in I}\sigma_H(V(G_{w_i})\setminus\{w_i\})\left(x_v+x_{w_i}-x_{w''_i} \right).
\end{eqnarray*}
By the eigenequations of $H$ at $v,w_i$ and $w''_i$, we have
\begin{eqnarray*}
\rho(H)x_v&=&x_{w_i}+x_{w''_i}+\sum_{w\in V(H)\setminus \{v,w_i,w''_i\}}d_H(v ,w),\\
\rho(H)x_{w_i}&=&x_v+2x_{w''_i}+\sum_{w\in V(H)\setminus \{v,w_i,w''_i\}}d_H(w_i,w),\\
\rho(H)x_{w''_i}&=&2x_{w_i}+x_v+\sum_{w\in V(H)\setminus \{v,w_i,w''_i\}}d_H(w''_i,w).
\end{eqnarray*}
For $w\in V(H)\setminus \{v,w_i,w''_i\}$ with $i\in I$, we have $d_H(v, w)+d_H(w_i ,w)-d_H(w''_i,w)\geq 0$.
Then $\rho(H)(x_v+x_{w_i}-x_{w''_i})\geq -x_{w_i}+3x_{w''_i}$.
Since $(\rho(H)+1)(x_v+x_{w_i}-x_{w''_i})\geq x_v+2x_{w''_i}>0$, we have $x_v+x_{w_i}-x_{w''_i}>0$.
Thus
\begin{equation}\label{eq1}
\begin{array}{lll}
\frac{1}{2}(\rho(H')- \rho(H))
&\ge&\frac{1}{2}x^{\top}(D(H')-D(H))x\\  [2mm]
&\geq &2\sigma_H(V(G_v)\setminus\{v\})x_v\\[2mm]
&&+\sum_{i\in I}\sigma_H(V(G_{w_i})\setminus\{w_i\})\left(x_v+x_{w_i}-x_{w''_i} \right)\\ [2mm]
&\geq& 0.
\end{array}
\end{equation}
It follows that $\rho(H)\le \rho(H')$.

If $I\ne\emptyset$, then $\sum_{i\in I}\sigma_H(V(G_{w_i})\setminus\{w_i\})>0$, and thus from (\ref{eq1}) we have $\rho(H)<\rho(H')$.
Suppose that $I=\emptyset$ and $\rho(H)= \rho(H')$. Then  $\sigma_H(V(G_v)\setminus\{v\})=0$, i.e., $V(G_v)=\{v\}$.
Since $|E(G)|\geq 2$,  we have $V(G_u)\setminus\{u\}\neq \emptyset$. Thus $d_{H}(u,z)<d_{H'}(u,z)$ for $z\in V(G_u)\setminus\{u\}$.
On the other hand, since all inequalities  in (\ref{eq1}) are equalities, we have $x^\top D(H')x=\rho(H')$, and thus
$x(H')=x$, implying that
$\rho(H)x_u=(D(H)x)_u<(D(H')x)_u=\rho(H')x_u$
contradiction. Thus $\rho(H)<\rho(H')$.

%
%

Suppose that $\sigma_H (V(P))<\sigma_H (V(Q))$.
Let $H''$ be the hypergraph obtained from $H$ by moving edges $e_{w_i}^1,\dots,e_{w_i}^{t_{w_i}}$ from $w_i$ to $w'_i$ for all $i$ with $1\leq i \leq k-2$ and  $t_i\geq 1$  if $ k\geq 3$,
moving edges $e_u^1,\dots,e_u^{t_u}$ from $v$ to $u$ if $t_u\geq 1$, and moving edges $e_v^1,\dots,e_v^{t_v}$ from $u$ to $u_1$ if $t_v\geq 1$.
It is easily seen that $H''\cong G_{u,v}(p-1,q+1)$.
By similar argument as above, we have that $\rho(H)<\rho(H'')$.

Thus $\rho(G_{u,v}(p,q))<\rho(G_{u,v}(p-1,q+1))$ or $\rho(G_{u,v}(p,q))<\rho(G_{u,v}(p+1,q-1))$, as desired.
\end{proof}

\begin{Corollary} \label{moving22} Let $G$ be a connected $k$-uniform hypergraph with $|E(G)|\geq 2$ and $u,v\in e\in E(G)$.
Suppose that $G-e$ consists of $k$  components and $d_G(u)=d_G(v)=1$.
For $p\ge q\ge 1$, $\rho(G_{u,v}(p,q))<\rho(G_{u,v}(p+1,q-1))$.
\end{Corollary}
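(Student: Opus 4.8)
The plan is to reduce everything to Theorem~\ref{moving2} by exploiting a symmetry that the extra hypothesis $d_G(u)=d_G(v)=1$ forces, much as the endgame of the proof of Theorem~\ref{moving1} used the isomorphism $G_u(a,b)\cong G_u(b,a)$. The first step is to observe that, under the stated hypotheses, $G_{u,v}(a,b)\cong G_{u,v}(b,a)$ for all integers $a,b\ge 0$, so that $\rho(G_{u,v}(a,b))=\rho(G_{u,v}(b,a))$. Indeed, since $u,v\in e$ and $d_G(u)=d_G(v)=1$, the edge $e$ is the only edge of $G$ that meets $\{u,v\}$; writing $e=\{u,v,w_1,\dots,w_{k-2}\}$, the map on the vertex set of $G_{u,v}(a,b)$ that interchanges $u$ and $v$, fixes $V(G)\setminus\{u,v\}$ pointwise, and carries the pendant path of length $a$ at $u$ onto the pendant path of length $a$ at $v$ (vertex by vertex, in order) and the pendant path of length $b$ at $v$ onto the pendant path of length $b$ at $u$ sends $e$ to $e$, sends every other edge of $G$ (which avoids $u$ and $v$) to itself, and sends path edges to path edges; hence it is an isomorphism onto $G_{u,v}(b,a)$.

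With that in hand, the case $p=q$ is immediate: Theorem~\ref{moving2} gives $\rho(G_{u,v}(p,q))<\rho(G_{u,v}(p-1,q+1))$ or $\rho(G_{u,v}(p,q))<\rho(G_{u,v}(p+1,q-1))$, and since $p=q$ makes $G_{u,v}(p-1,q+1)\cong G_{u,v}(q+1,p-1)=G_{u,v}(p+1,q-1)$, both alternatives yield $\rho(G_{u,v}(p,q))<\rho(G_{u,v}(p+1,q-1))$.

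Now assume $p>q\ge 1$ and, for contradiction, $\rho(G_{u,v}(p,q))\ge\rho(G_{u,v}(p+1,q-1))$. Then Theorem~\ref{moving2} forces $\rho(G_{u,v}(p,q))<\rho(G_{u,v}(p-1,q+1))$, and I would extend this to a strictly increasing chain
\begin{equation*}
\rho(G_{u,v}(p,q))<\rho(G_{u,v}(p-1,q+1))<\cdots<\rho(G_{u,v}(p-j,q+j)),\qquad 0\le j\le p-q,
\end{equation*}
by induction on $j$. For the step from $j$ to $j+1$ with $1\le j\le p-q-1$, apply Theorem~\ref{moving2} to the pair $(p-j,q+j)$; this is permissible since $p-j\ge q+1\ge 1$ and $q+j\ge q\ge 1$. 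It gives $\rho(G_{u,v}(p-j,q+j))<\rho(G_{u,v}(p-j-1,q+j+1))$ or $\rho(G_{u,v}(p-j,q+j))<\rho(G_{u,v}(p-j+1,q+j-1))$; the second possibility together with the chain already built would give $\rho(G_{u,v}(p-j,q+j))<\rho(G_{u,v}(p-j+1,q+j-1))<\rho(G_{u,v}(p-j,q+j))$, which is absurd, so the first possibility holds and the chain reaches $j+1$. Setting $j=p-q$ produces $\rho(G_{u,v}(p,q))<\rho(G_{u,v}(q,p))$, contradicting $\rho(G_{u,v}(p,q))=\rho(G_{u,v}(q,p))$ from the first step. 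Hence $\rho(G_{u,v}(p,q))<\rho(G_{u,v}(p+1,q-1))$.

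The conceptual crux is the isomorphism $G_{u,v}(a,b)\cong G_{u,v}(b,a)$ of the first step, which is precisely what removes the ambiguity of the "or" in Theorem~\ref{moving2}. The part most prone to slips is the bookkeeping in the induction: one must keep checking that Theorem~\ref{moving2} is genuinely applicable to each pair $(p-j,q+j)$ (both entries $\ge 1$) and that the already-constructed portion of the chain is strict, so that the wrong branch of the dichotomy is really excluded at every stage; but no new estimate beyond Theorem~\ref{moving2} is needed.
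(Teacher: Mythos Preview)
Your proof is correct and follows essentially the same strategy as the paper's: both exploit the isomorphism $G_{u,v}(a,b)\cong G_{u,v}(b,a)$ coming from $d_G(u)=d_G(v)=1$, and both iterate Theorem~\ref{moving2} to build a monotone chain that eventually collides with this symmetry. The only cosmetic difference is that the paper halts the chain near the midpoint $\bigl(\lceil\tfrac{p+q+1}{2}\rceil,\lfloor\tfrac{p+q-1}{2}\rfloor\bigr)$ and invokes the isomorphism there, while you push the chain all the way to $(q,p)$; your version is also more explicit about why the unwanted branch of Theorem~\ref{moving2} is ruled out at each inductive step, which the paper leaves to the reader.
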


\begin{proof} If $p=q$, then the result follows from Theorem~\ref{moving2}. Suppose that $p>q$.
Suppose that $\rho(G_{u,v}(p,q))<\rho(G_{u,v}(p-1,q+1))$. Since $d_G(u)=d_G(v)=1$, we have
 $G_{u,v}(\lceil\frac{p+q+1}{2}\rceil,\lfloor\frac{p+q-1}{2}\rfloor)\cong G_{u,v}(\lfloor\frac{p+q-1}{2}\rfloor,\lceil\frac{p+q+1}{2}\rceil)$.
Using  Theorem~\ref{moving2} repeatedly, we
have $\rho(G_{u,v}(p,q))\le \rho(G_{u,v}(\lceil\frac{p+q+1}{2}\rceil,\lfloor\frac{p+q-1}{2}\rfloor))<\rho(G_{u,v}(\lfloor\frac{p+q-1}{2}\rfloor,\lceil\frac{p+q+1}{2}\rceil))$, which is impossible.
Thus $\rho(G_{u,v}(p,q))<\rho(G_{u,v}(p+1,q-1))$.
\end{proof}


Let $G$ be a connected $k$-uniform hypergraph with $|E(G)|\geq 2$, and let $e=\{w_1,\dots, w_k\}$ be an edge of $G$, where $d_{G}(w_i)=1$ for $1\leq i\leq k-1$, $d_{G}(w_k)\geq 2$. For $1\le i\le k-1$, let $H_i$ be a connected $k$-uniform hypergraph  with $v_i\in V(H_i)$. Suppose that $G, H_1, \dots, H_{k-1}$ are vertex-disjoint.
For $0\le s\le k-1$, let $G_{e,s}(H_1,\dots, H_{k-1})$  be the hypergraph obtained by identifying $w_{i}$ of $G$ and $v_i$ of $H_i$ for  $s+1\leq i\leq k-1$  and identifying  $w_{k}$ of $G$ and $v_i$ of $H_i$ for all $i$ with $1\leq i\leq s$.
In particular, if $H_i=S_{t_i(k-1)+1,k}$ and $v_i$ is its heart, where $t_i\geq 0$ and $1\leq i\leq k-1$, then we write $G_{e,s}(t_1,\dots, t_{k-1})$ for $G_{e,s}(H_1,\dots,H_{k-1})$.

\begin{Theorem} \label{moving3}
Suppose that  $|E(H_j)|\geq 1$ for some $j$ with $1\leq j\leq k-1$.
Then $\rho (G_{e,0}(H_1,\dots,H_{k-1}))>\rho(G_{e,s}(H_1,\dots,H_{k-1}))$ for $j\leq s \leq k-1$.
\end{Theorem}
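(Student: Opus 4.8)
The plan is to reduce the statement to a single elementary transformation and iterate. For $0\le i\le k-1$ write $G^{(i)}:=G_{e,i}(H_1,\dots,H_{k-1})$, so that $G^{(i)}$ and $G^{(i-1)}$ differ only in the attachment of the branch $H_i$: at the leaf $w_i$ of $e$ in $G^{(i-1)}$, and at the hub $w_k$ in $G^{(i)}$. The core of the argument is the following claim: for $1\le i\le k-1$, $\rho(G^{(i-1)})\ge\rho(G^{(i)})$, with strict inequality whenever $|E(H_i)|\ge1$. Granting this, the theorem is immediate, since along the chain $\rho(G^{(0)})\ge\rho(G^{(1)})\ge\dots\ge\rho(G^{(s)})$ the step from $G^{(j-1)}$ to $G^{(j)}$ is strict (as $|E(H_j)|\ge1$) and, because $j\le s$, that step lies in the chain; hence $\rho(G_{e,0})=\rho(G^{(0)})>\rho(G^{(s)})=\rho(G_{e,s})$.

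To prove the claim I would fix $i$, put $G_2=G^{(i)}$, $G_1=G^{(i-1)}$, $H=H_i$, and let $F$ be the common sub-hypergraph of $G_1$ and $G_2$ (namely $G$ with $H_l$ attached at $w_l$ for $l>i$ and at $w_k$ for $l<i$), in which $w_i$ is still a pendant vertex of $e$ and $d_F(w_k)\ge2$. Since $w_i$ in $G_1$ and $w_k$ in $G_2$ are cut vertices separating $H$ from the rest, passing from $G_2$ to $G_1$ alters only the distance between a vertex $a\in V(H)\setminus\{v_i\}$ and a vertex $b$ on the $F$-side; a short case analysis (using that $w_i$ is a leaf hanging on $w_k$ through $e$) shows this distance decreases by $1$ for $b=w_i$, increases by $1$ for $b\in W\cup\{w_k\}$ with $W:=(V(G)\setminus e)\cup\bigcup_{l<i}(V(H_l)\setminus\{v_l\})$, and is unchanged otherwise. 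With $x=x(G_2)$ and $\sigma=\sigma_{G_2}(V(H)\setminus\{v_i\})$ this gives
\[
\frac{1}{2}\,x^{\top}\big(D(G_1)-D(G_2)\big)x=\sigma\big(\sigma_{G_2}(W)+x_{w_k}-x_{w_i}\big).
\]

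Subtracting the $(\rho(G_2),x)$-eigenequations of $G_2$ at $w_i$ and at $w_k$, the same leaf structure collapses the right-hand side and yields $(\rho(G_2)+1)(x_{w_i}-x_{w_k})=\sigma_{G_2}(W)+\sigma$, so that
\[
\frac{1}{2}\,x^{\top}\big(D(G_1)-D(G_2)\big)x=\frac{\sigma}{\rho(G_2)+1}\big(\rho(G_2)\,\sigma_{G_2}(W)-\sigma\big).
\]
Because $|E(G)|\ge2$, the set $V(G)\setminus e$ is nonempty and contained in $W$; taking any $a^{*}\in V(G)\setminus e$ and using the eigenequation at $a^{*}$ together with the fact that each vertex of $V(H)\setminus\{v_i\}$ is at distance at least $2$ from $a^{*}$ in $G_2$, one gets $\rho(G_2)\,\sigma_{G_2}(W)\ge\rho(G_2)\,x_{a^{*}}\ge2\sigma$. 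Hence $\frac{1}{2}x^{\top}(D(G_1)-D(G_2))x\ge\sigma^{2}/(\rho(G_2)+1)\ge0$, and Rayleigh's principle gives $\rho(G_1)\ge x^{\top}D(G_1)x\ge x^{\top}D(G_2)x=\rho(G_2)$, the inequality being strict exactly when $\sigma>0$, i.e.\ when $V(H)\setminus\{v_i\}\ne\emptyset$, i.e.\ when $|E(H_i)|\ge1$ (a connected $k$-uniform hypergraph carrying an edge has at least $k\ge2$ vertices).

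The step I expect to demand the most care is the distance bookkeeping behind the two displays: the $F$-side vertices must be split according to whether they lie in $e$, in $V(G)\setminus e$, or inside another branch $H_l$, and in the last case further according to whether $l<i$ or $l>i$, since the distance change is different in each class; pinning down the correct set $W$ is precisely what makes the two identities fit together. The remaining estimates are routine applications of Rayleigh's principle and of the eigenequations.
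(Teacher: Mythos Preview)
Your argument is correct. The distance bookkeeping, the eigenequation identity $(\rho(G_2)+1)(x_{w_i}-x_{w_k})=\sigma_{G_2}(W)+\sigma$, and the bound $\rho(G_2)\,x_{a^*}\ge 2\sigma$ all check out, and the chain $\rho(G^{(0)})\ge\cdots\ge\rho(G^{(s)})$ with strictness at step $j$ gives the theorem.

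The paper proceeds a bit differently. Instead of moving the branches one at a time, it moves all of $H_1,\dots,H_s$ to $w_k$ in a single step, takes $x=x(H')$ for the target $H'=G_{e,s}$, and computes $x^\top(D(H)-D(H'))x$ directly; this produces cross-terms $\sigma_{H'}(V(H_i)\setminus\{w_i\})\cdot\sigma_{H'}(V(H_l)\setminus\{w_l\})$ for $i\ne l$ in the moved set, which are simply discarded as nonnegative. For the key positivity the paper does not derive your exact identity but instead picks a neighbour $v$ of $w_k$ inside $G$ and shows $x_v+x_{w_k}-x_{w_j}>0$ from the eigenequations at $v,w_k,w_j$ together with the triangle-type inequality $d_{H'}(v,w)+d_{H'}(w_k,w)\ge d_{H'}(w_j,w)$. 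Your stepwise decomposition trades those cross-terms for a cleaner single-branch computation, and your positivity argument (an equality from two eigenequations followed by the crude bound $\rho\,x_{a^*}\ge 2\sigma$) is slightly more quantitative than the paper's three-vertex comparison; either route reaches the same conclusion with comparable effort.
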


\begin{proof} Let $H=G_{e,0}(H_1,\dots,H_{k-1})$. For $1\leq i\leq k-1$, let $t_i$ be the number of edges of  $H-e$ containing $w_i$.
For $1\leq i\leq k-1$ with $t_i\ge 1$, let $e_i^1,\dots, e_i^{t_i}$ be the $t_i$ edges of  $H-e$ containing $w_i$.
Let $H'$ be the hypergraph obtained by moving edges $e_i^1,\dots, e_i^{t_i}$  from $w_i$ to $w_k$ for all $i$ with $1\leq i\leq s$ and $t_i\geq 1$.
It is easily seen that $H'\cong G_{e,s}(H_1, \dots, H_{k-1})$.

Let $I=\{i: 1\leq i\leq s,~t_i\geq 1 \}$.
As we pass from $H$ to $H'$, for $i,l \in I$ with $i\neq l$,
the distance between a vertex of $V(H_i)\setminus \{w_i\}$ and a vertex of $V(H_l)\setminus \{w_l\}$ is decreased by $1$,
the distance between a vertex of $V(H_i)\setminus \{w_i\}$ and a vertex of $(V(G)\setminus e) \cup\{w_k\}$ is decreased by $1$,
the distance between a vertex of $V(H_i)\setminus \{w_i\}$ and $w_i$ is increased by $1$,
and the distance between any other vertex
pair remains unchanged. Let $x=x(H')$. Then
\begin{equation}\label{eq2}
\begin{array}{lll}
&&\frac{1}{2}(\rho(H)- \rho(H'))\\ [2mm]
&\ge&\frac{1}{2}x^{\top}(D(H)-D(H'))x \\ [2mm]
&=&\sum_{i\in I}\sigma_{H'}(V(H_i)\setminus \{w_i\})\left(\sum_{l\in I \atop l>i}\sigma_{H'}(V(H_l)\setminus \{w_l\})\right.\\[2mm]
&&\left.+\sigma_{H'}(V(G)\setminus e)+x_{w_k}-x_{w_i}\vphantom{\sum_{j=i+1}^{s}}\right) \\ [2mm]
&\geq&\sum_{i\in I}\sigma_{H'}(V(H_i)\setminus \{w_i\})\left(\sigma_{H'}(V(G)\setminus e) +x_{w_k}-x_{w_i}\right)\\[2mm]
&\geq&\sigma_{H'}(V(H_t)\setminus \{w_j\})\left(\sigma_{H'}(V(G)\setminus e) +x_{w_k}-x_{w_j}\right).
\end{array}
\end{equation}
Note that there is an edge different from $e$ containing $w_k$. Let $v$ be a vertex different from $w_k$ in such an edge of $G$.
For $w\in V(H)\setminus \{v,w_k,w_j\}$, we have $d_{H'}(v ,w)+d_{H'}(w_k ,w)-d_{H'}(w_j ,w)\geq 0$.
By the eigenequations of $H'$ at $v,w_k$ and $w_j$, we have $\rho(H')(x_v +x_{w_k}-x_{w_j})\geq -x_v +3x_{w_j}$. Since $(\rho(H')+1)(x_v+x_{w_k}-x_{w_j})\geq x_{w_k}+2x_{w_j}>0$, we have $x_v +x_{w_k}-x_{w_j}>0$.
Then $\sigma_{H'}(V(G)\setminus e) +x_{w_k}-x_{w_j}>x_v +x_{w_k}-x_{w_j}>0$.
From~(\ref{eq2}), we have $\rho(H)>\rho(H')$.
\end{proof}

\begin{Corollary} \label{moving33} For $G_{e,0}(t_1,\dots, t_{k-1})$, if $t_j\geq 1$ for some $j$ with  $1\leq j\leq k-1$,
then $\rho (G_{e,0}(t_1,\dots, t_{k-1}))>\rho(G_{e,s}(t_1,\dots,t_{k-1}) $ for $j\leq s \leq k-1$.
\end{Corollary}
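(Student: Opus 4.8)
The plan is to derive this corollary as a direct specialization of Theorem~\ref{moving3}. Recall that the notation $G_{e,s}(t_1,\dots,t_{k-1})$ was \emph{defined} to mean $G_{e,s}(H_1,\dots,H_{k-1})$ in the particular case $H_i=S_{t_i(k-1)+1,k}$ with $v_i$ chosen to be the heart of that hyperstar, for $1\le i\le k-1$, and the standing hypotheses attached to Theorem~\ref{moving3} (namely $|E(G)|\ge 2$, the edge $e=\{w_1,\dots,w_k\}$ has $d_G(w_i)=1$ for $1\le i\le k-1$ and $d_G(w_k)\ge 2$, and $G,H_1,\dots,H_{k-1}$ are vertex-disjoint) are exactly the conventions fixed just before that theorem. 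So the whole content of the corollary is to check that the single nontrivial hypothesis of Theorem~\ref{moving3}, that $|E(H_j)|\ge 1$ for some $j$, is met here.

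First I would record that a hyperstar $S_{t_i(k-1)+1,k}$ has exactly $t_i$ edges; this is immediate from the definition of $S_{n,k}$ (equivalently, from the fact that a $k$-uniform hypertree with $m$ edges has order $1+(k-1)m$). Hence $|E(H_i)|=t_i$ for each $i$, and the hypothesis $t_j\ge 1$ of the corollary is literally the hypothesis $|E(H_j)|\ge 1$ of Theorem~\ref{moving3}. Then I would simply invoke Theorem~\ref{moving3} with these $H_1,\dots,H_{k-1}$ and this index $j$: it gives $\rho(G_{e,0}(H_1,\dots,H_{k-1}))>\rho(G_{e,s}(H_1,\dots,H_{k-1}))$ for every $s$ with $j\le s\le k-1$, which, after unwinding the abbreviation, is precisely $\rho(G_{e,0}(t_1,\dots,t_{k-1}))>\rho(G_{e,s}(t_1,\dots,t_{k-1}))$ for $j\le s\le k-1$, as claimed.

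Since the corollary is a pure specialization, there is no genuine obstacle; the only thing that needs care is the bookkeeping. One should note in passing that the indices $i$ with $t_i=0$ correspond to $H_i=S_{1,k}$, a single vertex, so that ``identifying $v_i$ with $w_i$'' is vacuous for those $i$ and the construction $G_{e,s}(t_1,\dots,t_{k-1})$ is well defined and nontrivial exactly because at least one $t_j\ge 1$. With that remark, the argument is complete.
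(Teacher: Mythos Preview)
Your proposal is correct and matches the paper's approach: the paper gives no explicit proof for this corollary, treating it as an immediate specialization of Theorem~\ref{moving3} with $H_i=S_{t_i(k-1)+1,k}$, which is exactly what you do. Your careful verification that $|E(H_i)|=t_i$ (so that $t_j\ge 1$ is precisely the hypothesis $|E(H_j)|\ge 1$) makes explicit the one point the paper leaves to the reader.
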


\section{Distance spectral radius of uniform hypertrees}

In this section we study the distance spectral radius of $k$-uniform hypertrees using the results in Section~3.

For positive integers $\Delta ,n$ with $1\le \Delta\le \frac{n-1}{k-1}$,
let $B_{n,k}^\Delta$ be the $k$-uniform hypertree obtained from vertex-disjoint  hyperstar $S_{(\Delta-1)(k-1)+1,k}$ with center $u$  and loose path $P_{n-(\Delta-1)(k-1),k}$ with an end vertex $v$ by identifying $u$ and $v$.
In particular,  $B_{n,k}^\Delta \cong P_{n,k}$ if $\Delta =1,2$.

\begin{Theorem}\label{max-hyper2}
Let $T$ be a $k$-uniform hypertree on $n$ vertices with maximum degree $\Delta$, where $1\le \Delta\le \frac{n-1}{k-1}$. Then $\rho(T)\leq \rho(B_{n,k}^\Delta)$ with equality if and only if $T\cong B_{n,k}^\Delta$.
\end{Theorem}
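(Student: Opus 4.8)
The plan is to fix a $k$-uniform hypertree $T$ on $n$ vertices with maximum degree $\Delta$ and show that, by a sequence of structure-preserving graft transformations drawn from Section~3, we can push $T$ toward $B_{n,k}^\Delta$ while strictly increasing the distance spectral radius at each step that actually changes the hypertree. Let $u$ be a vertex of $T$ with $d_T(u)=\Delta$. The idea is to "straighten out" $T$ around every vertex other than $u$, so that $u$ becomes the unique vertex of degree $\ge 3$ and all the branches hanging off $u$ become loose paths; combined with the fact that $B_{n,k}^\Delta$ is the unique such hypertree with $\Delta-1$ of those branches being single pendant edges and one being a long loose path, this will force $T\cong B_{n,k}^\Delta$.

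First I would use Theorem~\ref{moving3} (equivalently Corollary~\ref{moving33}) to argue that at any vertex $w\ne u$ with $d_T(w)\ge 2$ lying on a pendant edge $e=\{w_1,\dots,w_k\}$ of $T$ (with $d_T(w_i)=1$ for $i<k$ and $w=w_k$), we may identify all the hanging sub-hypertrees $H_1,\dots,H_{k-1}$ attached at the $w_i$'s into $w_k$; this strictly increases $\rho$ unless they were already all attached at $w_k$. Iterating this kind of move from the "leaves inward" lets me assume every edge of $T$ is either incident to $u$ or is an edge of a loose path, i.e., $T$ consists of $\Delta$ pendant loose paths $P^{(1)},\dots,P^{(\Delta)}$ attached at $u$, of lengths $p_1\ge p_2\ge\cdots\ge p_\Delta\ge 1$ with $\sum_{i}p_i = m = \frac{n-1}{k-1}$. (Here I must be careful that these reductions do not change $n$, do not change $d_T(u)=\Delta$, and keep the object a hypertree; each application of Theorem~\ref{moving3} is designed to do exactly this.) Then I would apply Theorem~\ref{moving1}, which says $\rho(G_u(p,q))<\rho(G_u(p+1,q-1))$ for $p\ge q\ge 1$: repeatedly replacing two pendant paths at $u$ of lengths $p_i\ge p_j$ by paths of lengths $p_i+1$ and $p_j-1$ strictly increases $\rho$ and, carried to the end, collapses all but one of the $\Delta$ branches to length $1$ and lengthens the remaining one to $m-(\Delta-1)$. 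The resulting hypertree is precisely $B_{n,k}^\Delta$: a hyperstar $S_{(\Delta-1)(k-1)+1,k}$ centered at $u$ with a loose path $P_{n-(\Delta-1)(k-1),k}$ attached at $u$.

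The main obstacle I expect is bookkeeping, not conceptual: I need to verify that the reductions in the first stage genuinely leave me with exactly the configuration "$\Delta$ loose paths glued at a single vertex $u$" and that no intermediate step ever creates a vertex of degree exceeding $\Delta$ (which would violate the hypothesis and could, in principle, allow $\rho$ to exceed $\rho(B_{n,k}^\Delta)$, breaking the induction). The cleanest way to handle this is an induction on $n$ (or on the number of edges not lying on a loose path through $u$, or on $\sum_i p_i^2$ for the path-straightening stage): in the first stage, each use of Theorem~\ref{moving3} strictly decreases the number of "bad" edges while not increasing the maximum degree, because the move only consolidates branches already sitting on a pendant edge into its high-degree endpoint; in the second stage, $\sum_i p_i^2$ strictly increases under each Theorem~\ref{moving1} move and is bounded, so the process terminates at the unique extremal shape. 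Finally, for the equality statement: if $T\not\cong B_{n,k}^\Delta$ then at least one of the graft moves above is applicable and strictly increases $\rho$, so $\rho(T)<\rho(B_{n,k}^\Delta)$; conversely $\rho(B_{n,k}^\Delta)=\rho(B_{n,k}^\Delta)$ trivially, which completes the proof.
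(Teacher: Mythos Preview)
Your second stage (using Theorem~\ref{moving1} to unbalance the pendant paths at $u$ until all but one have length~$1$) is fine and matches the paper. The problem is the first stage: the transformation you invoke from Theorem~\ref{moving3}/Corollary~\ref{moving33} goes the \emph{wrong way}. In that theorem one has $\rho(G_{e,0}(H_1,\dots,H_{k-1}))>\rho(G_{e,s}(H_1,\dots,H_{k-1}))$, i.e.\ moving the sub-hypertrees $H_1,\dots,H_s$ from the leaves $w_1,\dots,w_s$ of the pendant edge $e$ onto the high-degree endpoint $w_k$ strictly \emph{decreases} the distance spectral radius, not increases it. So the consolidation step you describe (``identify all the hanging sub-hypertrees \dots into $w_k$; this strictly increases $\rho$'') is false as stated, and your reduction to ``$\Delta$ loose paths at $u$'' cannot be carried out this way. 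There is a second, related difficulty you flag but do not resolve: even if the move did increase $\rho$, piling several $H_i$ onto $w_k$ would raise $d_T(w_k)$ and could push it above $\Delta$, so the intermediate tree would no longer satisfy the hypothesis.

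The paper's proof avoids both issues by using Theorem~\ref{moving2}/Corollary~\ref{moving22} instead of Theorem~\ref{moving3} for the ``straightening'' phase. One picks a vertex $v\neq u$ of degree $\ge 3$ farthest from $u$ (so that beyond $v$ everything is already pendant paths), finds an edge $e$ with two pendant paths of lengths $p\ge q\ge 1$ attached at distinct degree-one vertices $w_j,w_l\in e$, and replaces $G_{w_j,w_l}(p,q)$ by $G_{w_j,w_l}(p+1,q-1)$. By Corollary~\ref{moving22} this strictly increases $\rho$, it never raises the degree of any vertex (one path just absorbs the other), and iterating it eliminates all vertices of degree $\ge 3$ other than $u$ (and, for $k\ge 3$, all edges with three or more vertices of degree~$2$). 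After that, Theorem~\ref{moving1} finishes exactly as you describe. So the fix is simply to swap the tool in your first stage: use the pendant-path redistribution of Corollary~\ref{moving22}, not the subtree-consolidation of Theorem~\ref{moving3}.
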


\begin{proof}
It is trivial if $\Delta= 1$. Suppose that $\Delta\geq 2$.
Let $T$ be a $k$-uniform hypertree of order $n$ with maximum degree $\Delta$ having maximum distance spectral radius.

Let $u$ be a vertex of $T$ with degree $\Delta$.

\noindent
{\bf Case~1.} $\Delta\geq 3$.

Suppose that there are at least two vertices of degree at least $3$ in $T$.
Choose a vertex $v$ of degree at least $3$ such that  $d_T(u,v)$ is as large as possible.
Let $T_1,\dots,T_{d_T(v)}$ be the vertex disjoint sub-hypergraphs of $T-v$ such that  $T[V(T_i)\cup\{v\}]$  is a  $k$-uniform hypertree for $1\leq i\leq d_T(v)$. Assume that $u\in V(T_1)$.
If $k=2$, then $T[V(T_i)\cup\{v\}]$ is a pendant path at $v$ for  $2\leq i\leq d_T(v)$.
Suppose that $k\geq 3$ and  $T[V(T_i)\cup\{v\}]$  is not a pendant path at $v$ for some $i$ with $2\leq i\leq d_T(v)$.
Then there is at least one edge  in $ E(T[V(T_i)\cup\{v\}])$ with at least three vertices of degree two. We choose such an edge $e=\{w_1,\dots ,w_k\}$ by requiring that
$d_T(v,w_1)$
is as large as possible, where $d_T(v,w_1)=d_T(v,w_j)-1$ for $2\leq j\leq k$.
Then
there are at least two pendant paths  at different vertices of $e$, say $P=(w_j,e_1,u_1,\dots, u_{p-1},e_p,u_p)$ at $w_j$ and $Q=(w_l,e'_1,v_1,\dots,v_{q-1},e'_q,v_q)$ at $w_l$, where $2\leq j<l\leq k$ and $p,q\geq 1$.
Then $T\cong H_{w_j,w_l}(p,q)$, where $H=T[V(T)\setminus (V(P\cup Q)\setminus\{w_j,w_l\}) ]$. Assume that $p\ge q$.
Note that $d_H(w_j)=d_H(w_l)=1$ and  $T'=H_{w_j,w_l}(p+1,q-1)$ is a $k$-uniform hypertree with maximum degree $\Delta$.
By Corollary~\ref{moving22}, we have $\rho(T)<\rho(T')$, a contradiction.
Thus $T[V(T_i)\cup\{v\}]$ is a pendant path at $v$ for  $2\leq i\leq d_T(v)$ for $k\geq 2$.
Let $l_i$ be the length of the pendant path $T[V(T_i)\cup\{v\}]$ at $v$, where  $2\leq i\leq d_T(v)$ and $l_i\geq 1$.
Then $T\cong G_v(l_2,l_3)$, where $G=T[V(T)\setminus V(T_2\cup T_3)]$. Assume that $l_2\ge l_3$.  Note that $T''=G_v(l_2+1,l_3-1)$ is a $k$-uniform hypertree with  maximum degree $\Delta$.
By Theorem~\ref{moving1}, $\rho(T)<\rho(T'')$, a contradiction.
Thus $u$ is the unique  vertex of degree at least $3$ in $T$.

Let $G_1,\dots,G_\Delta$ be the vertex disjoint sub-hypergraphs of $T-u$ such that $T[V(G_i)\cup\{u\}]$  is a connected $k$-uniform hypergraph for  $1\leq i\leq \Delta$.
By similar argument as above, $T[V(G_i)\cup\{u\}]$  is a pendant path at $u$ for  $1\leq i\leq \Delta$.
Let $l_i$ be the length of the pendant path $T[V(G_i)\cup\{u\}]$ at $u$ for $1\leq i\leq \Delta$.
Suppose that there are at least two pendant paths of length at least two at $u$, say $T[V(G_i)\cup\{u\}]$ and $T[V(G_j)\cup\{u\}]$ are such two paths, where $1\leq i<j\leq \Delta$.
Then $T\cong H_u(l_i,l_j)$, where $H=T[V(T)\setminus V(G_i\cup G_j) ]$. Assume that $l_i\ge l_j$.
Note that  $T^*=H_u(l_i+1,l_j-1)$ is also a $k$-uniform hypertree with maximum degree $\Delta$.
By Theorem~\ref{moving1}, $\rho(T)<\rho(T^*)$, a contradiction.
Thus there is at most one pendant path of length at least $1$, implying that  $T\cong B_{n,k}^\Delta$.

\noindent
{\bf Case~2.} $\Delta=2$.

It is trivial if $k=2$.
Suppose that $k\geq 3$ and  $T\ncong B_{n,k}^2$. Then there is an edge in $T$ with at least three vertices of degree $2$.
Choose such an edge $e=\{w_1,\dots ,w_k\}$ in $ E(T)$  such that $d_T(u,w_1)$ is as large as possible, where $d_T(u,w_1)=d_T(u,w_j)-1$ for $2\leq j\leq k$.
Then there are two pendant paths  at different vertices of $e$, say $P=(w_j,e_1,u_1,\dots, u_{p-1},e_p,u_p)$ at $w_j$ and $Q=(w_l,e'_1,v_1,\dots,v_{q-1},e'_q,v_q)$ at $w_l$, where $2\leq j<l\leq k$ and  $p,q\geq 1$.
Then $T\cong H_{w_j,w_l}(p,q)$, where $H=T[V(T)\setminus (V(P\cup Q)\setminus \{w_j,w_l\}) ]$. Assume that $p\ge q$.
Note that $d_H(w_j)=d_H(w_l)=1$ and  $T'=H_{w_j,w_l}(p+1,q-1)$ is a $k$-uniform hypertree with maximum degree~$2$.
By Corollary~\ref{moving22}, we have $\rho(T)<\rho(T')$, a contradiction.
Thus there are at most two vertices of degree two in each edge, implying that $T\cong B_{n,k}^2$.

Combining Cases~1 and~2, we complete the proof.
\end{proof}

\begin{Theorem}\label{max-hyper1}
Let $T$ be a $k$-uniform hypertree on $n$ vertices, where $\frac{n-1}{k-1}\geq 1$. Then $\rho(T)\leq \rho(P_{n,k})$ with equality if and only if $T\cong P_{n,k}$.
\end{Theorem}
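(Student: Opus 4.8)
The plan is to combine Theorem~\ref{max-hyper2} with a one-parameter monotonicity statement for the hypertrees $B_{n,k}^\Delta$. Write $m=\frac{n-1}{k-1}$ for the number of edges of any $k$-uniform hypertree on $n$ vertices; since a vertex lies on at most $m$ edges, the maximum degree $\Delta$ of such a hypertree satisfies $1\le\Delta\le m$, so Theorem~\ref{max-hyper2} applies to any $k$-uniform hypertree $T$ of order $n$ with maximum degree $\Delta$ and gives $\rho(T)\le\rho(B_{n,k}^\Delta)$, with equality if and only if $T\cong B_{n,k}^\Delta$. Hence it suffices to show $\rho(B_{n,k}^\Delta)\le\rho(P_{n,k})$ for every admissible $\Delta$, with equality precisely when $B_{n,k}^\Delta\cong P_{n,k}$, that is, when $\Delta\le 2$.

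First I would establish that $\rho(B_{n,k}^\Delta)<\rho(B_{n,k}^{\Delta-1})$ for $3\le\Delta\le m$ by a single application of Theorem~\ref{moving1}. In $B_{n,k}^\Delta$ the center $u$ of the attached hyperstar has degree $\Delta$: it lies on the $\Delta-1$ pendant edges of $S_{(\Delta-1)(k-1)+1,k}$ and at the end of the loose path $P_{n-(\Delta-1)(k-1),k}$, which is a pendant path at $u$ of length $m-\Delta+1\ge 1$. Let $G$ be obtained from $B_{n,k}^\Delta$ by deleting one of those pendant edges together with the whole loose path but keeping $u$; then $G\cong S_{(\Delta-2)(k-1)+1,k}$ is connected with $|E(G)|=\Delta-2\ge 1$, and $B_{n,k}^\Delta\cong G_u(m-\Delta+1,1)$ with $m-\Delta+1\ge 1$. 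By Theorem~\ref{moving1} (with $p=m-\Delta+1$, $q=1$) we get $\rho(G_u(m-\Delta+1,1))<\rho(G_u(m-\Delta+2,0))$, and $G_u(m-\Delta+2,0)$ is precisely $G$ with one pendant path of length $m-\Delta+2=m-(\Delta-1)+1$ attached at $u$, i.e.\ $G_u(m-\Delta+2,0)\cong B_{n,k}^{\Delta-1}$. This proves the strict inequality.

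Iterating this inequality from $\Delta$ down to $2$ gives $\rho(B_{n,k}^\Delta)<\rho(B_{n,k}^2)=\rho(P_{n,k})$ whenever $\Delta\ge 3$, while $B_{n,k}^1\cong B_{n,k}^2\cong P_{n,k}$. Combining with the reduction above: if $T$ has maximum degree $\Delta\ge 3$, then $\rho(T)\le\rho(B_{n,k}^\Delta)<\rho(P_{n,k})$; if $\Delta\le 2$, then $\rho(T)\le\rho(B_{n,k}^\Delta)=\rho(P_{n,k})$ with equality if and only if $T\cong B_{n,k}^\Delta\cong P_{n,k}$. In either case $\rho(T)\le\rho(P_{n,k})$ with equality if and only if $T\cong P_{n,k}$, which is the assertion.

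There is no serious obstacle: the essential work, namely pinning down the extremal hypertree for fixed maximum degree, is already contained in Theorem~\ref{max-hyper2}, and the remaining optimization over $\Delta$ is a single graft transformation handled by Theorem~\ref{moving1}. The only points that require care are the edge-count bookkeeping, so that $G_u(m-\Delta+1,1)$ and $G_u(m-\Delta+2,0)$ are correctly identified with $B_{n,k}^\Delta$ and $B_{n,k}^{\Delta-1}$, and the boundary cases $\Delta=m$, where $B_{n,k}^m\cong S_{n,k}$ and $G$ is a hyperstar with $m-2$ edges, and $\Delta=3$, where $|E(G)|=1$; both remain within the hypotheses of Theorem~\ref{moving1}.
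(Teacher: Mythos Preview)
Your proposal is correct and follows essentially the same route as the paper: reduce to $B_{n,k}^\Delta$ via Theorem~\ref{max-hyper2}, then use Theorem~\ref{moving1} to get $\rho(B_{n,k}^\Delta)<\rho(B_{n,k}^{\Delta-1})$ for $\Delta\ge 3$. You spell out the identification $B_{n,k}^\Delta\cong G_u(m-\Delta+1,1)$ and $B_{n,k}^{\Delta-1}\cong G_u(m-\Delta+2,0)$ explicitly, whereas the paper states the inequality in one line and argues by contradiction on a maximizer, but the substance is identical.
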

\begin{proof} It is trivial if $\frac{n-1}{k-1}=1,2$. Suppose that $\frac{n-1}{k-1}\geq 3$.
Let $T$ be a $k$-uniform hypertree of order $n$ with maximum distance spectral radius.
Let $\Delta$ be the maximum degree of $T$. Then by Theorem~\ref{max-hyper2},
$T\cong B_{n,k}^\Delta$. Suppose that $\Delta\geq 3$.
By Theorem~\ref{moving1}, we have $\rho(B_{n,k}^\Delta)<\rho(B_{n,k}^{\Delta-1})$, a contradiction.
Then $\Delta=2$, and thus $T\cong B_{n,k}^2\cong P_{n,k}$.
\end{proof}

For $k\ge 3$ and a loose path$P_{n-k+1,k}=(u_0,e_1,u_1,\dots ,e_{\frac{n-k}{k-1}},u_{\frac{n-k}{k-1}})$, where $\frac{n-1}{k-1}\geq 3$,
let 
$F_{n,k}$ be the $k$-uniform hypertree obtained from $P_{n-k+1,k}$  by attaching a pendant edge at
a vertex of degree one in $e_2$. If $\frac{n-1}{k-1}=3$, then $F_{n,k}\cong P_{n,k}$.

\begin{Lemma}\label{hyper1}
Suppose that $\frac{n-1}{k-1}\geq 3$ and $k\geq 3$. Then $\rho(B_{n,k}^3)<\rho(F_{n,k})$.
\end{Lemma}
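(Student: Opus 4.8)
The plan is to exhibit $F_{n,k}$ as the result of a single \emph{edge-sliding} graft transformation applied to $B_{n,k}^3$, and then to show via Rayleigh's principle that this transformation strictly increases the distance spectral radius. Write $m=\frac{n-1}{k-1}\ge 3$, and realize $B:=B_{n,k}^3$ as the loose path $L=(u_0,e_1,u_1,e_2,u_2,\dots,e_{m-1},u_{m-1})$ — which has $m-1$ edges and is isomorphic to $P_{n-k+1,k}$ — together with one extra pendant edge $f=\{u_1\}\cup Y$ attached at $u_1$, where $Y$ denotes the remaining $k-1$ (degree-one) vertices of $f$; thus $d_B(u_1)=3$, and $e_1$ is itself a pendant edge at $u_1$. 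Since $k\ge 3$, the second edge $e_2$ of $L$ contains a vertex $z$ of degree one in $L$. Let $F'$ be obtained from $B$ by moving the edge $f$ from $u_1$ to $z$, i.e.\ replacing $f$ by $f'=(f\setminus\{u_1\})\cup\{z\}$. Then $F'$ is exactly $L$ with a pendant edge attached at a degree-one vertex of its second edge, so $F'\cong F_{n,k}$, and it suffices to prove $\rho(F')>\rho(B)$.

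I would compare $D(F')$ with $D(B)$ using $x=x(B)$, the positive unit distance Perron vector of $B$. Since the pendant edges $f$ and $f'$ produce no shortcuts, all distances within $V(L)$ agree in $B$, $F'$, and $L$, and all distances within $Y$ are unchanged; moreover, for $y\in Y$ and $b\in V(L)$ we have $d_B(y,b)=1+d_L(u_1,b)$ and $d_{F'}(y,b)=1+d_L(z,b)$. As $z$ and $u_1$ both lie in $e_2$, checking $d_L(z,b)-d_L(u_1,b)$ edge by edge along $L$ shows it equals $+1$ precisely for $b\in e_1$, equals $-1$ for $b=z$, and is $0$ otherwise. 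Hence
\[
\tfrac12\bigl(x^{\top}D(F')x-\rho(B)\bigr)=\sigma_B(Y)\bigl(\sigma_B(e_1)-x_z\bigr),
\]
so by Rayleigh's principle it is enough to show $\sigma_B(e_1)>x_z$.

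For this, note that the involution of $B$ interchanging the two pendant edges $e_1$ and $f$ at $u_1$ is an automorphism, hence fixes $x$; so $\sigma_B(e_1\setminus\{u_1\})=\sigma_B(Y)$, and thus $\sigma_B(e_1)=x_{u_1}+\sigma_B(Y)>\sigma_B(Y)$. It therefore suffices to prove $\sigma_B(Y)>x_z$. All vertices of $Y$ are interchangeable by automorphisms of $B$, so their $x$-entries have a common value $x_y$, giving $\sigma_B(Y)=(k-1)x_y\ge 2x_y$. Finally, subtract the $(\rho,x)$-eigenequations of $B$ at $z$ and at a vertex $y\in Y$: one checks $d_B(z,v)-d_B(y,v)\le 0$ for all $v\neq z,y$, with strict inequality at $v=u_2$, which collects to $(\rho(B)+2)(x_y-x_z)\ge x_{u_2}>0$. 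Hence $x_z<x_y\le\frac12\sigma_B(Y)<\sigma_B(Y)$, which completes the argument.

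The step requiring the most care is the distance bookkeeping in the second paragraph: one must verify that sliding $f$ from $u_1$ to the neighbouring vertex $z$ of $e_2$ changes only the pairs $\{y,b\}$ with $y\in Y$, increasing distances by $1$ exactly to the vertices of $e_1$ and decreasing by $1$ only to $z$ (intuitively, $Y$ moves one step away from the $e_1$-end of the path and one step towards $z$). I note that the whole argument is uniform in $m\ge 3$; for $m=3$ it specializes to $\rho(S_{n,k})<\rho(P_{n,k})$, which is also a consequence of Theorem~\ref{max-hyper1}.
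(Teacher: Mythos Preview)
Your approach is essentially the same as the paper's: realize $B_{n,k}^3$ and $F_{n,k}$ as differing by sliding the pendant edge $f$ from $u_1$ to a degree-one vertex $z$ of $e_2$, apply Rayleigh's principle with the Perron vector $x$ of $B=B_{n,k}^3$, and reduce to the inequality $\sigma_B(e_1)>x_z$. Your distance bookkeeping in the second paragraph is correct.

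The gap is in the last step. The pointwise claim ``$d_B(z,v)-d_B(y,v)\le 0$ for all $v\ne z,y$'' is false: for $v\in Y\setminus\{y\}$ (and $|Y|=k-1\ge 2$, so such $v$ exist) one has $d_B(y,v)=1$ while $d_B(z,v)=2$, so the difference is $+1$. Consequently your derivation $(\rho(B)+2)(x_y-x_z)\ge x_{u_2}$ does not follow, and in fact the conclusion $x_y>x_z$ can fail outright: when $m=3$ the graph $B$ is the hyperstar $S_{n,k}$, where all non-center entries of $x$ coincide, so $x_y=x_z$.

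The fix is minor. Instead of comparing $y$ with $z$, compare $u_0+u_1$ with $z$: for every $w\notin e_1\cup\{z\}$ one has $d_B(u_0,w)+d_B(u_1,w)-d_B(z,w)\ge 0$ (this time the check at $w\in Y$ gives $2+1-2=1\ge 0$), and summing the eigenequations at $u_0,u_1,z$ yields $(\rho(B)+1)(x_{u_0}+x_{u_1}-x_z)\ge x_{u_1}+2x_z>0$, hence $\sigma_B(e_1)\ge x_{u_0}+x_{u_1}>x_z$. This is exactly what the paper does. Alternatively, if you keep your route through $\sigma_B(Y)$, the corrected subtraction gives $(\rho(B)+k)x_y=(\rho(B)+k-1)x_z+x_{u_2}+\sum_{v\notin e_1\cup e_2\cup f}x_v$, from which $(k-1)x_y>x_z$ follows immediately since $(k-1)(\rho+k-1)\ge \rho+k$ for $k\ge 3$.
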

\begin{proof} If $\frac{n-1}{k-1}=3$, then the result follows from Theorem~\ref{max-hyper1}.

Suppose that $\frac{n-1}{k-1}\geq 4$.
Let $T=F_{n,k}$. Let $v\in e_2$ such that $d_T(v)=1$, and let $e$ be the pendant edge at $v$.
Let $T'$ be the hypergraph obtained from $T$ by moving $e$ from $v$ to $u_1$. Obviously, $T'\cong B_{n,k}^3$.

As we pass from $T$ to $T'$, the distance between a vertex of $e\setminus\{v\}$ and a vertex of $e_1$ is decreased by $1$, the distance between a vertex of $e\setminus\{v\}$ and $v$ is increased by $1$, and the distance between any other vertex pair remained unchanged.
Let $x=x(T')$.
Then
\begin{eqnarray*}
\frac{1}{2}(\rho(T)-\rho(T'))x_v&\geq&\frac{1}{2}x^\top(D(T)-D(T'))x\\
&=&( \sigma_{T'}(e)-x_{v})(\sigma_{T'}(e_1)-x_v).
\end{eqnarray*}
From the eigenequations of $T$ at $u_0,u_1$ and $v$, we have
\begin{eqnarray*}
\rho(T')x_{u_0}&=&x_{u_1}+2x_v+\sum_{i=2}^{k-1}x_{w_i}+\sum_{w\in V(T)\setminus(e_1\cup\{v\})}d_{T'}(u_0,w)x_w,\\
\rho(T')x_{u_1}&=&x_{u_0}+x_v+\sum_{i=2}^{k-1}x_{w_i}+\sum_{w\in V(T)\setminus(e_1\cup\{v\})}d_{T'}(u_1,w)x_w,\\
\rho(T')x_{v}&=&2x_{u_0}+x_{u_1}+\sum_{i=2}^{k-1}2x_{w_i}+\sum_{w\in V(T)\setminus(e_1\cup\{v\})}d_{T'}(v,w)x_w.
\end{eqnarray*}
Note that for $w\in V(T)\setminus(e_1\cup\{v\})$, $d_{T'}(u_0,w)+d_{T'}(u_1,w)-d_{T'}(v,w)\geq0$.
Then $ (\rho(T')+1)(x_{u_0}+x_{u_1}-x_{v})\geq x_{u_1}+2x_v>0$, and thus $\sigma_{T'}(e_1)-x_v>x_{u_0}+x_{u_1}-x_{v}>0$.
It follows that $\rho(T')<\rho(T)$, as desired.
\end{proof}


Let $T$ be a $k$-uniform hypertree of order $n$, where $T\ncong P_{n,k}$. Then $\frac{n-1}{k-1}\geq 3$, and if $\frac{n-1}{k-1}=3$, then $T\cong S_{n,k}$.

Let $F_{n,2}=B^3_{n,2}$.

\begin{Theorem}\label{max-hyper3}
Let $T$ be a $k$-uniform hypertree of order $n$, where $T\ncong P_{n,k}$ and $\frac{n-1}{k-1}\geq4$. Then $\rho(T)\leq \rho(F_{n,k}) $ with equality if and only if $T\cong F_{n,k}$.
\end{Theorem}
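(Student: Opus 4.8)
The plan is to show that among all $k$-uniform hypertrees $T$ of order $n$ with $T\not\cong P_{n,k}$, the one maximizing $\rho$ is $F_{n,k}$, by arguing that a maximizer must be ``almost'' a loose path. First I would fix $T$ to be a $k$-uniform hypertree of order $n$ with $T\not\cong P_{n,k}$ and $\frac{n-1}{k-1}\geq 4$ that has maximum distance spectral radius. Since $T\not\cong P_{n,k}$, Theorem~\ref{max-hyper1} applies and tells us $T$ is not a loose path; by Theorem~\ref{max-hyper2} combined with Theorem~\ref{moving1} (as in the proof of Theorem~\ref{max-hyper1}), pushing the branch transformation as far as possible forces $T\cong B_{n,k}^\Delta$ for some $\Delta\geq 3$, or else forces $\Delta=2$ with exactly one edge carrying three vertices of degree $2$. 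So the maximizer lies among $B_{n,k}^\Delta$ for $\Delta\ge 3$ and the ``$\Delta=2$'' near-path hypertrees.

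Next I would dispose of the large-$\Delta$ case: by Theorem~\ref{moving1}, applied at the unique vertex $u$ of degree $\Delta$ of $B_{n,k}^\Delta$ (which has $\Delta-1$ pendant edges and one pendant loose path hanging off $u$), merging one pendant edge into the long path strictly increases $\rho$, so $\rho(B_{n,k}^\Delta)<\rho(B_{n,k}^{\Delta-1})$ for $\Delta\ge 4$, and hence any $B_{n,k}^\Delta$ with $\Delta\ge 3$ is beaten (strictly) by $B_{n,k}^3$ unless $\Delta=3$. This leaves $B_{n,k}^3$ and the $\Delta=2$ candidates. For the $\Delta=2$ case with $k\ge 3$: if $T$ has maximum degree $2$ but $T\not\cong P_{n,k}$, then some edge has three or more vertices of degree $2$; using Corollary~\ref{moving22} exactly as in Case~2 of the proof of Theorem~\ref{max-hyper2}, such a configuration is strictly improved, so the only surviving $\Delta=2$ hypertree is $P_{n,k}$ itself, which is excluded. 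Thus when $k\ge 3$ the maximizer is $B_{n,k}^3$; for $k=2$ the $\Delta=2$ case is the ordinary path, excluded, and $B^3_{n,2}=F_{n,2}$ by definition, so $T\cong B^3_{n,2}=F_{n,2}$ there.

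Finally, for $k\ge 3$, I would improve $B_{n,k}^3$ itself: $B_{n,k}^3$ has a vertex $u$ of degree $3$ carrying two pendant edges and one pendant loose path. The claim is that moving a pendant edge from $u$ to the next vertex along the loose path (i.e.\ the vertex $u_1$ adjacent to $u$ in the loose-path direction) strictly increases $\rho$ and produces $F_{n,k}$. This is precisely the content of Lemma~\ref{hyper1}: $\rho(B_{n,k}^3)<\rho(F_{n,k})$. Combining, every $T$ with $T\not\cong P_{n,k}$ satisfies $\rho(T)\le\rho(B_{n,k}^3)$ with equality iff $T\cong B_{n,k}^3$ (for $k\ge 3$; and $T\cong F_{n,2}$ for $k=2$), and then $\rho(B_{n,k}^3)\le\rho(F_{n,k})$ with equality iff $B_{n,k}^3\cong F_{n,k}$. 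Since $B_{n,k}^3\not\cong F_{n,k}$ when $\frac{n-1}{k-1}\ge 4$ and $k\ge3$ (Lemma~\ref{hyper1} gives strict inequality), we conclude $\rho(T)\le\rho(F_{n,k})$ with equality iff $T\cong F_{n,k}$.

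The main obstacle I anticipate is the bookkeeping in the first reduction step: one must be careful that every graft transformation invoked (Theorem~\ref{moving1} at $u$ or at an internal vertex $v$, Corollary~\ref{moving22} on an edge with $\ge 3$ degree-$2$ vertices) keeps the resulting hypergraph a \emph{hypertree} of the same order $n$ and does not accidentally raise the maximum degree in a way that escapes the induction, and that ``as large as possible'' choices of the distance $d_T(u,\cdot)$ really do leave only pendant paths beyond the chosen vertex or edge — exactly the structural lemmas already carried out inside the proof of Theorem~\ref{max-hyper2}, which I would cite rather than redo. The genuinely new analytic input, the strict comparison $\rho(B_{n,k}^3)<\rho(F_{n,k})$, is supplied by Lemma~\ref{hyper1}, so no further eigenvalue estimate is needed.
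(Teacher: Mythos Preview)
Your argument has a genuine gap in the $\Delta=2$ case for $k\ge 3$, and it leads to an internal contradiction in your final paragraph. Note first that $F_{n,k}$ itself has maximum degree~$2$ when $k\ge 3$: it is a loose path with one extra pendant edge attached at a formerly degree-$1$ vertex of $e_2$, so every vertex has degree at most~$2$. Consequently your sentence ``every $T$ with $T\not\cong P_{n,k}$ satisfies $\rho(T)\le\rho(B_{n,k}^3)$'' already contradicts Lemma~\ref{hyper1}, which says $\rho(B_{n,k}^3)<\rho(F_{n,k})$.

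The flaw is in how you dismissed the $\Delta=2$ candidates. Case~2 of Theorem~\ref{max-hyper2} shows that any $\Delta=2$ hypertree $T\not\cong P_{n,k}$ admits a strict improvement $\rho(T)<\rho(T')$ with $T'$ still of maximum degree~$2$; but nothing prevents $T'$ from being $P_{n,k}$, and when $T'=P_{n,k}$ this inequality gives no information about whether $T$ is extremal among hypertrees \emph{not} isomorphic to $P_{n,k}$. In particular, applying Corollary~\ref{moving22} once to $F_{n,k}$ produces $P_{n,k}$, so your reduction cannot rule out $F_{n,k}$. The paper's proof handles this by carrying out a finer analysis in the $\Delta=2$ case: it shows a maximizer can have at most one edge with three or more degree-$2$ vertices, that this edge has exactly three such vertices, and that at least two of the three attached pendant paths must have length~$1$ --- at each step checking that the improved hypertree $T'$ remains $\not\cong P_{n,k}$. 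This pins down the $\Delta=2$ maximizer as $F_{n,k}$, after which Lemma~\ref{hyper1} eliminates $B_{n,k}^3$. Your outline needs to incorporate this careful $\Delta=2$ analysis rather than discard the case wholesale. (Incidentally, your description of how $B_{n,k}^3$ is transformed into $F_{n,k}$ is also off: the pendant edge must be moved to a degree-$1$ vertex inside the first edge of the loose path, not to the next degree-$2$ vertex $u_1$ along it.)
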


\begin{proof} Let $T$ be a $k$-uniform hypertree of order $n$ nonisomorphic to $P_{n,k}$ with maximum distance spectral radius.

Let $\Delta$ be the maximum degree of $T$. Then $\Delta\geq 3$ if $k=2$ and $\Delta\geq 2$ if $k\geq 3$.

Suppose that $\Delta\geq 3$. Then by Theorem~\ref{max-hyper2},
$T\cong B_{n,k}^\Delta$. Suppose that $\Delta\geq 4$. Note that $B_{n,k}^{\Delta-1} \ncong P_{n,k}$.
By Theorem~\ref{moving1}, we have $\rho(B_{n,k}^\Delta)<\rho(B_{n,k}^{\Delta-1})$, a contradiction.
Thus $\Delta=3$. It follows that $T\cong B_{n,k}^3$. The result for $k=2$ follows.

Suppose  in the following that $k\geq3$.

Suppose that $\Delta=2$.
Since $T\ncong P_{n,k}$, there is at least one edge with at least three vertices of degree $2$.
Suppose that there are at least two such edges. Let $u$ be a vertex of degree one in $T$.
Choose an edge $e=\{w_1,\dots ,w_k\}$ in $E(T)$ with at least $3$ vertices of degree two such that $d_T(u,w_1)$ is as large as possible, where  $d_T(u,w_1)=d_T(u,w_i)-1$ for $2\leq i\leq k$.
Then
there are at least two pendant paths  at different vertices of $e$, say $P=(w_i,e_1,u_1,\dots, u_{p-1},e_p,u_p)$ at $w_i$ and $Q=(w_j,e'_1,v_1,\dots,v_{q-1},e'_q,v_q)$ at $w_j$, where $1\leq i<j\leq k$ and $p,q\geq 1$.
Then $T\cong H_{w_i,w_j}(p,q)$, where $H=T[V(T)\setminus (V(P\cup Q)\setminus\{w_i,w_j\}) ]$. Assume that $p\ge q$.
Note that $T'=H_{w_i,w_j}(p+1,q-1)$ is a $k$-uniform hypertree that is not isomorphic to $P_{n,k}$.
By Corollary~\ref{moving22}, we have $\rho(T)<\rho(T')$, a contradiction.
Thus there is exactly one edge $e$ with at least three vertex of degree $2$.

We claim that there are exactly three vertices of degree two in $e$. Otherwise, $k\ge 4$ and
 there are four vertices $w_1,w_2,w_3$ and $w_4$ of degree two in $e$.
Let $Q_i$  be the pendant path of length $l_i$  at $w_i$, where $l_i\ge 1$ for $i=1,2,3,4$. Assume that $l_1\ge l_2$. Let $G=T[V(T)\setminus (V(Q_1\cup Q_2)\setminus \{w_1,w_2\})]$.
Then $T\cong G_{w_1,w_2}(l_1,l_2)$,
Note that $T''=G_{w_1,w_2}(l_1+1,l_2-1)$ is a $k$-uniform hypertree that is not isomorphic to $P_{n,k}$.
By Corollary~\ref{moving22}, $\rho(T)<\rho(T'')$, a contradiction.
Thus there are exactly three vertices of degree two in $e$, say $w_1,w_2,$ and $w_3$.

Let $Q_i$ be the pendant path at $w_i$ with length $l_i$, where $i=1,2,3$ and $l_i\ge1 $. Assume that $l_1\geq l_2\geq 2$. Let $G=T[V(T)\setminus (V(Q_1\cup Q_2)\setminus\{w_1,w_2\})]$.
Then $T\cong G_{w_1,w_2}(l_1,l_2)$. Note that
 $T^*=G_{w_1,w_2}(l_1+1,l_2-1)$ is a $k$-uniform hypertree that is not isomorphic to $P_{n,k}$.
By Corollary~\ref{moving22}, $\rho(T)<\rho(T^*)$, a contradiction.
Thus there are at least two of $Q_1,Q_2$ and $Q_3$ with length $1$.
Thus $T\cong F_{n,k}$.

By Lemma~\ref{hyper1}, $\rho(B_{n,k}^3)<\rho(F_{n,k})$. Thus $T\cong F_{n,k}$.
\end{proof}

\begin{Theorem}\label{min-hyper1}
Let $T$ be a $k$-uniform hypertree on $n$ vertices, where $\frac{n-1}{k-1}\geq1$. Then $\rho(T)\geq \rho(S_{n,k})$ with equality if and only if $T\cong S_{n,k}$.
\end{Theorem}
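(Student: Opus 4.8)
The plan is to argue by contradiction. Let $T$ be a $k$-uniform hypertree of order $n$ with \emph{minimum} distance spectral radius; I will show $T\cong S_{n,k}$, and uniqueness in the equality case follows at once, since any hypertree attaining the minimum is such a $T$. First dispose of the cases $\frac{n-1}{k-1}\le 2$, where $S_{n,k}\cong P_{n,k}$ is the only $k$-uniform hypertree of order $n$; so assume $m:=\frac{n-1}{k-1}\ge 3$ and suppose, for contradiction, that $T\ncong S_{n,k}$.

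The crux is a structural claim: if $T\ncong S_{n,k}$, then $T$ has an edge $e=\{w_1,\dots,w_k\}$ such that at least two components of $T-e$ contain an edge. Recall that $T-e$ has exactly $k$ components $B_1,\dots,B_k$ with $w_i\in B_i$, since a path in $T-e$ between two vertices of $e$ together with $e$ would close a cycle. I would prove the claim by contraposition: if for every edge $e$ at most one component of $T-e$ contains an edge, then the $m-1$ remaining edges all lie in one branch, so every edge of $T$ is a pendant edge and hence has $k-1$ leaves. Taking a vertex $w$ of maximum degree $\Delta$ (note $\Delta\ge 2$ since $m\ge 2$), every edge through $w$ is then pendant at $w$, so all $\Delta(k-1)$ of their remaining vertices are leaves; an edge avoiding $w$ would be vertex-disjoint from all edges through $w$ (a leaf cannot be shared), so no path could join it to $w$, contradicting connectedness. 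Hence every edge contains $w$, i.e.\ $d_T(w)=m$ and $T\cong S_{n,k}$, a contradiction.

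Now pick such an edge $e$ and relabel so that $B_k$ is non-trivial and some $B_j$ with $1\le j\le k-1$ is non-trivial. Put $G=T[V(B_k)\cup e]$ and $H_i=T[V(B_i)]$ with $v_i=w_i$ for $1\le i\le k-1$. Then $e$ is a pendant edge of $G$ with $d_G(w_k)\ge 2$ and $|E(G)|\ge 2$, each $H_i$ is a connected $k$-uniform hypergraph, $|E(H_j)|\ge 1$, and $T\cong G_{e,0}(H_1,\dots,H_{k-1})$. By Theorem~\ref{moving3}, $\rho(T)=\rho(G_{e,0}(H_1,\dots,H_{k-1}))>\rho(G_{e,k-1}(H_1,\dots,H_{k-1}))$, and $G_{e,k-1}(H_1,\dots,H_{k-1})$ is again a $k$-uniform hypertree of order $n$; this contradicts the minimality of $\rho(T)$. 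Therefore $T\cong S_{n,k}$, which proves $\rho(T)\ge\rho(S_{n,k})$ for every $k$-uniform hypertree $T$ of order $n$. For the equality case: if $T\ncong S_{n,k}$, the same argument produces a $k$-uniform hypertree $T'$ of order $n$ with $\rho(T)>\rho(T')\ge\rho(S_{n,k})$, so equality forces $T\cong S_{n,k}$.

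The step I expect to be the main obstacle is the structural claim — cleanly showing that the absence of an edge with two non-trivial sides forces $T$ to be a hyperstar — together with checking that the decomposition $T\cong G_{e,0}(H_1,\dots,H_{k-1})$ meets all the hypotheses of Theorem~\ref{moving3} (notably that $e$ is pendant in $G$, $d_G(w_k)\ge 2$, $|E(G)|\ge 2$, and $|E(H_j)|\ge 1$). Once these are in place, the result is a single application of Theorem~\ref{moving3}.
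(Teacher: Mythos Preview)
Your argument is correct and follows the same overall strategy as the paper: take a minimizer $T$, assume $T\ncong S_{n,k}$, locate a suitable edge, and apply the graft transformation of Theorem~\ref{moving3} to produce a hypertree with strictly smaller distance spectral radius. The difference lies in how the edge is chosen. The paper selects it via a diametral path: if $d\ge 3$ and $P=(v_0,e_1,\dots,e_d,v_d)$ is diametral, then at the penultimate edge $e_{d-1}$ every branch off a vertex $w_i\ne v_{d-2}$ consists only of pendant edges, so the branches are hyperstars and the special case Corollary~\ref{moving33} applies directly. You instead prove a clean structural lemma (a non-hyperstar hypertree has an edge whose removal leaves at least two nontrivial components) and then invoke Theorem~\ref{moving3} in full generality. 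Your route avoids the diametral-path bookkeeping at the cost of the short structural argument; the paper's route exploits the diameter to get the hyperstar shape of the $H_i$ for free. Both are one-shot applications of the same graft transformation; neither is materially harder than the other. One small point: when you write $G=T[V(B_k)\cup e]$ and $H_i=T[V(B_i)]$, these share the vertices $w_1,\dots,w_{k-1}$, so strictly speaking you should pass to vertex-disjoint isomorphic copies before forming $G_{e,0}(H_1,\dots,H_{k-1})$; your use of $\cong$ rather than $=$ already signals this, but it would be worth saying explicitly.
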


\begin{proof}  It is trivial if $\frac{n-1}{k-1}\leq2$. Suppose that $\frac{n-1}{k-1}\geq3$.
Let $T$ be a $k$-uniform hypertree of order $n$ with minimum distance spectral radius.

Let $d$ be the diameter of $T$. Obviously, $d\ge 2$.  Suppose that $d\geq 3$.
Let $P=(v_0, e_1,v_1,\dots,v_{d-1},e_d,v_d)$ be a diametral path of $T$.
Let $e_{d-1}=\{w_1,w_2,\dots, w_k\}$, where $w_{k-1}=v_{d-1}$ and $w_k=v_{d-2}$.
For $1\leq i\leq k-1$, let $t_i$ be the number of pendant edges at $w_i$  outside $P$,  where  $t_{k-1}\geq 1$.
For $1\leq i\leq k-1$ with $t_i\ge 1$, let $e_i^1,\dots,e_i^{t_i}$ be the $t_i$ pendant edges at $w_i$  outside $P$, where  $e_{w_{k-1}}^1=e_s$. Then $T\cong G_{e_{d-1},0}(t_1,\dots ,t_{k-1})$, where $G=T[V(T)\setminus E_1]$ and $E_1=\cup _{1\leq i\leq k-1 \atop 1\leq j \leq t_i} (e_i^j\setminus\{w_i\})$.
Let $T'$ be the hypergraph obtained from $G_{e_{d-1},0}(t_1,\dots ,t_{k-1})$ by moving edges $e_i^1,\dots,e_i^{t_i}$ from $w_i$ to $w_k$ for all $i$ with $1\leq i\leq k-1$ and $t_i\ge 1$.
Then $T'\cong G_{e_{d-1},k-1}(t_1, \dots, t_k)$. By Corollary~\ref{moving33}, $\rho (T)>\rho(T')$, a contradiction.
Thus $d=2$, implying that $T\cong S_{n,k}$.
\end{proof}

An automorphism of a hypergraph $G$ is a bijection  on $V(G)$ 
which induces a bijection  on $E(G)$.

\begin{Lemma}\label{hyper3} Let $G$ be a connected $k$-uniform hypergraph with $\eta$ being an automorphism of $G$. Let $x=x(G)$. Then $\eta(u)=v$ implies that $x_u=x_v$.
\end{Lemma}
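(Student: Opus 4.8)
The plan is to exploit the uniqueness of the distance Perron vector together with the fact that an automorphism of $G$ preserves distances. First I would record the key observation that if $\eta$ is an automorphism of $G$, then $d_G(\eta(u),\eta(v))=d_G(u,v)$ for all $u,v\in V(G)$; this is immediate because $\eta$ induces a bijection on $E(G)$ and hence maps each path from $u$ to $v$ to a path of the same length from $\eta(u)$ to $\eta(v)$, and conversely. In matrix terms, letting $P$ be the permutation matrix of $\eta$ (so $(Pz)_w = z_{\eta^{-1}(w)}$), this says $P^\top D(G)P = D(G)$, equivalently $D(G)P = PD(G)$.

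Next I would define $y = P x$, i.e.\ $y_w = x_{\eta^{-1}(w)}$, and check that $y$ is again a positive unit eigenvector of $D(G)$ for the eigenvalue $\rho(G)$. Indeed $D(G)y = D(G)Px = PD(G)x = P(\rho(G)x) = \rho(G)(Px) = \rho(G)y$; moreover $y$ is positive since $x$ is and $P$ merely permutes coordinates, and $\|y\|=\|x\|=1$ since a permutation is orthogonal. By the Perron--Frobenius theorem, as already invoked in the introduction, $\rho(G)$ is a simple eigenvalue of the irreducible nonnegative matrix $D(G)$ with a \emph{unique} positive unit eigenvector, namely $x(G)=x$. Hence $y = x$, that is, $x_{\eta^{-1}(w)} = x_w$ for every $w\in V(G)$. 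Taking $w=v=\eta(u)$ gives $x_u = x_{\eta^{-1}(v)} = x_v$, which is the claim.

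There is essentially no hard obstacle here; the only point requiring a little care is to be explicit that an automorphism preserves the distance function (so that one genuinely gets $D(G)P=PD(G)$ rather than merely a statement about the adjacency structure), and then to apply the uniqueness clause of Perron--Frobenius correctly. Everything else is a one-line orthogonality/permutation computation.
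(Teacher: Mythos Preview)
Your argument is correct and matches the paper's own proof in essence: both introduce the permutation matrix $P$ of $\eta$, use that $\eta$ preserves distances to get $P^\top D(G)P=D(G)$, and then invoke uniqueness of the positive unit Perron eigenvector to conclude $Px=x$. The only cosmetic difference is that the paper reaches $Px=x$ via the equality case of Rayleigh's principle, whereas you verify directly that $Px$ is a $\rho(G)$-eigenvector using $D(G)P=PD(G)$; these are equivalent one-line justifications.
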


\begin{proof}
Let $P$ be the permutation matrix that corresponds to the automorphism $\eta$ of $G$. Then $D(G)=P^{\top}D(G)P$. Since $\rho(G)x=D(G)x$, we have $\rho(G)=x^{\top}Dx=(Px)^{\top}D(G)(Px)$. Obviously, $Px$ is a positive unit vector. 
Thus $Px=x$, from which the result follows.
\end{proof}

For $\frac{n-1}{k-1}\geq 3 $ and $1\leq a\leq \left\lfloor\frac{n-k}{2(k-1)}\right\rfloor$, let $D_{n,a}$ be the $k$-uniform hypertree obtained from vertex-disjoint $S_{a(k-1)+1,k}$ with center $u$ and $S_{n-k-a(k-1)+1,k}$ with center $v$  by adding $k-2$ new vertices $w_1,\dots,w_{k-2}$ and an edge $\{u,v, w_1,\dots,w_{k-2}\}$.

\begin{Lemma}\label{hyper2}
For $2\leq a\leq \left\lfloor\frac{n-k}{2(k-1)}\right\rfloor$, $\rho(D_{n,a})>\rho(D_{n,a-1})$.
\end{Lemma}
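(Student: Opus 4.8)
The plan is to compare $D_{n,a}$ and $D_{n,a-1}$ directly through a single graft transformation that moves one "arm" of the hyperstar centered at $v$ over to the hyperstar centered at $u$, and then estimate the quadratic form $x^\top(D(D_{n,a})-D(D_{n,a-1}))x$ using $x=x(D_{n,a-1})$ together with the Rayleigh bound $\rho(D_{n,a})\ge x^\top D(D_{n,a})x$. Concretely, write $D_{n,a-1}$ as having a hyperstar $S_{(a-1)(k-1)+1,k}$ at $u$ and $S_{n-k-(a-1)(k-1)+1,k}$ at $v$ joined by the edge $e=\{u,v,w_1,\dots,w_{k-2}\}$; since $a-1\ge 1$ and $a\le\lfloor\frac{n-k}{2(k-1)}\rfloor$ guarantees the $v$-side still has at least $a\ge 2$ pendant edges, pick one pendant edge $f$ at $v$ in $D_{n,a-1}$ and move $f$ from $v$ to $u$; the resulting hypergraph is isomorphic to $D_{n,a}$.

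First I would record exactly which distances change. Writing $f=\{v,z_1,\dots,z_{k-1}\}$ with the $z_j$ of degree one, moving $f$ from $v$ to $u$ affects only pairs with one endpoint in $f\setminus\{v\}=\{z_1,\dots,z_{k-1}\}$: the distance from each $z_j$ to $v$ increases by $1$, the distance from each $z_j$ to $u$ decreases by $1$, and the distance from each $z_j$ to every other vertex on the $u$-side of $D_{n,a-1}$ decreases by $1$ while to every vertex on the $v$-side (other than $v$ itself) it increases by $1$; distances to the $w_i$'s are unchanged since $w_i$ lies on the connecting edge equidistant-ish from both centers. Collecting terms, with $S=\sum_{j=1}^{k-1}x_{z_j}=\sigma_{D_{n,a-1}}(f\setminus\{v\})$ and denoting by $A$ the sum of Perron entries strictly on the $u$-side (including $u$) and by $B$ the corresponding sum strictly on the $v$-side (including $v$) in $D_{n,a-1}$ but excluding $f\setminus\{v\}$, one gets
\begin{equation*}
\tfrac12 x^\top(D(D_{n,a})-D(D_{n,a-1}))x = S\bigl(A - B\bigr).
\end{equation*}
So the whole lemma reduces to showing $A>B$, i.e. that in $D_{n,a-1}$ the total Perron weight on the (smaller) $u$-side strictly exceeds that on the $v$-side after stripping off the arm $f$. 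This is the step I expect to be the main obstacle.

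To handle $A>B$ I would use symmetry plus a counting/eigenequation argument. By Lemma~\ref{hyper3}, all $z_j$ in any single pendant edge share a common Perron value, and all pendant edges attached at the same center are interchangeable, so the Perron vector of $D_{n,a-1}$ is determined by a short list of values (one per "orbit" of vertices: the center $u$, the leaves of each $u$-arm, the center $v$, the leaves of each $v$-arm, the $w_i$'s). Since $a-1\le a-1$ and $n-k-(a-1)(k-1)\ge (n-k)/2 \ge a(k-1)$, the $v$-side has at least as many pendant edges as the $u$-side; intuitively the center with fewer arms is "closer to the mass" and should carry more weight, and removing one $v$-arm only amplifies the imbalance. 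I would make this rigorous by comparing the eigenequations at $u$ and at $v$: subtracting $\rho\,x_u=\sum_w d(u,w)x_w$ from $\rho\,x_v=\sum_w d(v,w)x_w$ and noting that for every vertex $w$ on the $u$-side $d(v,w)=d(u,w)+1$ (going through $e$) while for $w$ on the $v$-side $d(u,w)=d(v,w)+1$, one obtains $\rho(x_v-x_u)=(\text{$u$-side total})-(\text{$v$-side total})$ plus controlled terms from the $w_i$'s; feeding back the known ordering of arm-counts shows $x_u\ge x_v$ and then a pendant-arm comparison (longer-weighted side has larger leaf values, as in Lemma~\ref{hyper1}) upgrades this to the strict inequality $A>B$. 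Once $A>B$ is in hand, $S>0$ gives $\tfrac12(\rho(D_{n,a})-\rho(D_{n,a-1}))\ge S(A-B)>0$, and strictness of $\rho(D_{n,a})>\rho(D_{n,a-1})$ follows because equality in Rayleigh's principle would force $x(D_{n,a})=x(D_{n,a-1})$, contradicting the eigenequation at $u$ where some distance strictly decreased.
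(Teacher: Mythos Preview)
Your graft-transformation idea is natural, but the execution has a sign error that unravels the rest. When you move the arm $f=\{v,z_1,\dots,z_{k-1}\}$ from $v$ to $u$, distances from each $z_j$ to the $u$-side \emph{decrease} by $1$ and to the $v$-side (excluding $f$) \emph{increase} by $1$. Hence
\[
\tfrac12\,x^\top\bigl(D(D_{n,a})-D(D_{n,a-1})\bigr)x \;=\; S\,(B-A),
\]
not $S(A-B)$. So what you must prove is $B>A$, i.e.\ that in $D_{n,a-1}$ the Perron weight on the $v$-side \emph{after removing one arm} still exceeds the Perron weight on the entire $u$-side. Your heuristic (``the center with fewer arms is closer to the mass and should carry more weight'') at best supports $x_u\ge x_v$; it says nothing about the full side-sums, and in fact points the wrong way for $A$ versus $B$. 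In small examples (e.g.\ $k=2$, $n=6$, $a=2$) one checks numerically that $B>A$ holds while $A>B$ fails, so the inequality you set out to prove is simply false. Establishing the correct inequality $B>A$ from the eigenequations is not routine: subtracting the equations at $u$ and $v$ gives $\rho(x_u-x_v)=B'-A'$ for the \emph{full} side-sums, but you then need $\rho(x_u-x_v)>(k-1)\beta$ to absorb the removed arm, and your sketch does not supply this.

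The paper avoids this comparison entirely. Using Lemma~\ref{hyper3} it reduces the Perron eigenproblem for $D_{n,a}$ to five scalar equations (in $\alpha,\beta,\gamma,x_u,x_v$), writes down the resulting $5\times 5$ determinant, and obtains an explicit quintic $g_a(t)$ whose largest root is $\rho(D_{n,a})$. One then computes
\[
g_a(t)-g_{a-1}(t)=(b+1-a)(k-1)^2\,t\bigl(5t^2+(3k+3)t+2k\bigr),
\]
which is positive at $t=\rho_{a-1}$; since $g_a(t)<0$ for $t>\rho_a$, this forces $\rho_a>\rho_{a-1}$. If you want to salvage the transformation approach, you would need an independent proof of $B>A$; otherwise the polynomial route is the cleaner path here.
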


\begin{proof}
Let $b=\frac{n-k}{k-1}-a$.
Let $u$ and $ v $ be the vertices of $D_{n,a}$ with degree $a$ and $b$, respectively.
Let $E(D_{n,a})=\{e_i:  1\leq i\leq a+b+1\}$ and  $e_i=\{w_i^1,\dots,w_i^k\}$ for $1\leq i\leq a+b+1$, where $w_i^k=u $ if $1\leq i\leq a$,  $w_i^k=v $ for $a+1\leq i\leq a+b$, and $w_i^1=u$ and $w_i^k=v$ for $i=a+b+1$. Let $x=x(D_{n,a})$.
By Lemma~\ref{hyper3}, $x_{w_i^1}=x_{w_i^j}$ and $x_{w_1^1}=\cdots =x_{w_a^{1}}$ for $1\leq i\leq a$ and $1\leq j\leq k-1$,
$x_{w_i^1}=x_{w_i^j}$ and $x_{w_{a+1}^1}=\cdots =x_{w_{a+b}^{1}}$ for $a+1\leq i\leq a+b$ and $1\leq j\leq k-1$,
and $x_{w_i^2}=\cdots =x_{w_i^{k-1}}$ for $i=a+b+1$.
Let $\alpha=x_{w_1^1}$, $\beta=x_{w_{a+1}^1}$, and $\gamma=x_{w_{a+b+1}^2}$.
From the eigenequations of $D_{n,a}$ at $w_1^1$, $w_{a+1}^1$,  $w_{a+b+1}^1$, $u$ and $v$,
we have
\begin{eqnarray*}
\rho(D_{n,a})\alpha&=& (2(k-1)a-k)\alpha+3(k-1)b\beta +2(k-2)\gamma +x_u+2x_v ,\\
\rho(D_{n,a})\beta &=& 3(k-1)a\alpha+(2(k-1)b-k)\beta +2(k-2)\gamma +2x_u+x_v ,\\
\rho(D_{n,a})\gamma &=& 2(k-1)a\alpha+2(k-1)b\beta +(k-3)\gamma +x_u+x_v ,\\
\rho(D_{n,a})x_u &=& (k-1)a\alpha+2(k-1)b\beta +(k-2)\gamma +x_v ,\\
\rho(D_{n,a})x_v &=& 2(k-1)a\alpha+(k-1)b\beta +(k-2)\gamma +x_u .
\end{eqnarray*}
We view these equations as a homogeneous linear system  in the five  variables $\alpha, \beta, \gamma, x_u$, and $x_v$. Since it has a  nontrivial solution,  we have
\[
\det\left(
\begin{matrix}
 2(k-1)a-k-\rho & 3(k-1)b & 2(k-2) & 1 & 2\\
3(k-1)a & 2(k-1)b-k-\rho & 2(k-2) & 2 & 1\\
 2(k-1)a & 2(k-1)b & k-3-\rho & 1 & 1\\
 (k-1)a & 2(k-1)b & k-2 &  -\rho & 1,\\
 2(k-1)a & (k-1)b & k-2 &  1  & -\rho
\end{matrix}
\right)=0,
\]
where $\rho=\rho(D_{n,a})$.
Thus $\rho(D_{n,a})$ is the largest root of the equation $g_a(t)=0$, where
\begin{eqnarray*}
g_a(t)&=&-t^5+t^4(2ak+2bk-k-2a-2b-3)\\
&&+t^3(k^2+4ak^2+4bk^2+5abk^2-10abk\\
&&-ak-bk-4k+5ab-3a-3b-3)\\
&&+t^2(k^3+3abk^3+2ak^3+2bk^3+k^2-3abk^2\\
&&+4ak^2+4bk^2-3ab k-5ak-5bk-5k+3ab-a-b-1)\\
&&+t(2k^3+2ab k^3+3ak^3+3bk^3-k^2-4ab k^2\\
&&-ak^2-bk^2+2abk-2ak-2bk-2k)\\
&&+k^3+ak^3+bk^3-k^2-ak^2-bk^2.
\end{eqnarray*}
For $2\leq a\leq \left\lfloor\frac{n-k}{2(k-1)}\right\rfloor$, it is easily seen that
\[g_a(t)-g_{a-1}(t)=
(b+1-a)(k-1)^2 t[5t^2+(3k+3)t+2k].\]
Let $\rho_i=\rho(D_{n,i})$ for $1\leq i\leq \left\lfloor\frac{n-k}{2(k-1)}\right\rfloor$.
Then \begin{eqnarray*}
g_a(\rho_{a-1})&=&g_a(\rho_{a-1})-g_{a-1}(\rho_{a-1})\\
&=&
(b+1-a)(k-1)^2 \rho_{a-1}[5\rho_{a-1}^2+(3k+3)\rho_{a-1}+2k]\\
&>&0,
\end{eqnarray*}
from which, together with the fact that $g_a(t)<0$ for $t>\rho_a$, we have $\rho_a>\rho_{a-1}$ for $2\leq a\leq \left\lfloor\frac{n-k}{2(k-1)}\right\rfloor$.
\end{proof}

\begin{Theorem}\label{min-hyper2}
Let $T$ be a $k$-uniform hypertree of order $n$, where $T\ncong S_{n,k}$ and $\frac{n-1}{k-1}\geq3$. Then $\rho(T)\geq \rho(D_{n,1})$ with equality if and only if $T\cong D_{n,1}$.
\end{Theorem}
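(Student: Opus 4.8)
The plan is to let $T^{*}$ be a $k$-uniform hypertree of order $n$ with $T^{*}\ncong S_{n,k}$ and $\frac{n-1}{k-1}\geq 3$ that realizes the minimum value of $\rho$ over this (finite, nonempty) class, and to show $T^{*}\cong D_{n,1}$; the stated inequality and its equality case then follow immediately. The whole argument is a descent: I will decrease, in turn, the diameter down to $3$, then the number of "branch vertices" on the central edge down to $2$, and finally the parameter $a$ down to $1$, at each stage invoking a transformation from Section~3 to get a strict decrease of $\rho$ while staying inside the class $\{T:T\ncong S_{n,k}\}$.

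First I would fix the diameter. Since any hypertree of diameter at most $2$ is a hyperstar (the fact used at the end of the proof of Theorem~\ref{min-hyper1}) and $T^{*}\ncong S_{n,k}$, we have $d:=d(T^{*})\geq 3$. If $d\geq 4$, take a diametral path $(v_{0},e_{1},\dots,v_{d})$ and repeat the reduction in the proof of Theorem~\ref{min-hyper1}: writing $T^{*}\cong G_{e_{d-1},0}(t_{1},\dots,t_{k-1})$ and passing to $T'\cong G_{e_{d-1},k-1}(t_{1},\dots,t_{k})$, Corollary~\ref{moving33} yields $\rho(T^{*})>\rho(T')$. Moving those pendant edges onto $v_{d-2}$ shortens the diametral path by exactly one, so $d(T')=d-1\geq 3$, whence $T'\ncong S_{n,k}$, contradicting minimality. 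Hence $d(T^{*})=3$.

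Next I would use the structure of diameter-$3$ hypertrees. Fix a diametral path $(v_{0},e_{1},v_{1},e_{2},v_{2},e_{3},v_{3})$. A short case analysis (for diameter reasons, every edge meeting $e_{1}$, $e_{3}$, or a vertex of $e_{2}$ other than $v_{1},v_{2}$ must be a pendant edge at $v_{1}$, at $v_{2}$, or at that vertex, and no edge can lie farther from this path) shows $T^{*}$ is obtained from the single edge $e:=e_{2}=\{y_{1},\dots,y_{k}\}$ by attaching at each $y_{i}$ a hyperstar with $t_{i}\geq 0$ pendant edges, where $\sum_{i}t_{i}=\frac{n-k}{k-1}$ and the counts at $v_{1}$ and $v_{2}$ are $\geq 1$, so $r:=|\{i:t_{i}\geq 1\}|\geq 2$. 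If $r\geq 3$ (which forces $k\geq 3$), choose $w_{k}=y_{i_{0}}$ with $t_{i_{0}}\geq 1$, let $G$ consist of $e$ together with the $t_{i_{0}}$ pendant edges at $w_{k}$, and let $H_{i}=S_{t_{i}(k-1)+1,k}$ be centred at the remaining vertices $w_{1},\dots,w_{k-1}$ of $e$; then $d_{G}(w_{i})=1$ for $i\le k-1$, $d_{G}(w_{k})=t_{i_{0}}+1\geq 2$, $T^{*}\cong G_{e,0}(H_{1},\dots,H_{k-1})$, and at least two of $t_{1},\dots,t_{k-1}$ are positive, say $t_{1}\geq 1$. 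Applying Theorem~\ref{moving3} with $j=s=1$ gives $\rho(T^{*})>\rho(G_{e,1}(H_{1},\dots,H_{k-1}))=:\rho(T')$, and $T'$ is a hypertree of order $n$ still carrying hyperstars on two distinct vertices of $e$, hence of diameter $3$ and $T'\ncong S_{n,k}$ — again contradicting minimality. So $r=2$.

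With $r=2$, say $t_{i},t_{j}\geq 1$ and $t_{l}=0$ otherwise, the hypertree is precisely $D_{n,a}$ with $a=\min\{t_{i},t_{j}\}$, and $1\le a\le\lfloor\frac{t_{i}+t_{j}}{2}\rfloor=\lfloor\frac{n-k}{2(k-1)}\rfloor$. If $a\geq 2$, Lemma~\ref{hyper2} gives $\rho(T^{*})=\rho(D_{n,a})>\rho(D_{n,a-1})$ with $D_{n,a-1}\ncong S_{n,k}$ (as $a-1\geq 1$), contradicting minimality a last time. Hence $a=1$, i.e.\ $T^{*}\cong D_{n,1}$, which proves the bound together with the equality case. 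The main obstacle I expect is not conceptual but organizational: at every graft step one must check the hypotheses of Theorem~\ref{moving3}/Corollary~\ref{moving33} (especially $d_{G}(w_{k})\geq 2$) and, most importantly, verify that the transformed hypertree never collapses to $S_{n,k}$ — this is exactly why the descent is engineered to lower the diameter first, then $r$, then $a$, so that each intermediate hypertree keeps either diameter $\geq 3$ or two branch vertices on its central edge.
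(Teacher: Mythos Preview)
Your proof is correct and follows essentially the same descent strategy as the paper: take a minimizer, force the diameter down to $3$ via Corollary~\ref{moving33}, then use Theorem~\ref{moving3}/Corollary~\ref{moving33} to reduce the number of branch vertices on the central edge to two, and finally apply Lemma~\ref{hyper2} to force $a=1$. The only cosmetic difference is that the paper moves all the offending hyperstars onto $w_k$ in a single application of Corollary~\ref{moving33} (with $s=k-2$) rather than peeling them off one at a time with $j=s=1$ as you do, but this changes nothing substantive.
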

\begin{proof} It is trivial if $\frac{n-1}{k-1}=3$. Suppose that $\frac{n-1}{k-1}\geq 4$.
Let $T$ be a $k$-uniform hypertree of order $n$  nonisomorphic to $S_{n,k}$ with minimum distance spectral radius.

Let $d$ be the diameter of $T$. Since  $T\ncong S_{n,k}$, we have $d\ge 3$.
Let $P=(v_0, e_1,v_1,\dots,v_{d-1},e_d,v_d)$ be a diametral path of $T$.
Let $e_{d-1}=\{w_1,\dots, w_k\}$, where $w_{k-1}=v_{d-1}$ and $w_k=v_{d-2}$.
For $1\leq i\leq k-1$, let $t_i$ be the number of the pendant edges at $w_i$  outside $P$, where  $t_{k-1}\geq 1$.
For   $1\leq i\leq k-1$ with $t_i\ge 1$, let $e_i^1,\dots,e_i^{t_i}$ be the $t_i$  pendant edges at $w_i$  outside $P$, where  $e_{w_{k-1}}^1=e_d$.
Then  $T\cong G_{e_{d-1},0}(t_1,\dots ,t_{k-1})$,  where $G=T[V(T)\setminus E_1]$ and $E_1=\cup _{1\leq i\leq k-1 \atop 1\leq j \leq t_i}(e_i^j\setminus\{w_i\})$.
Assume that $T= G_{e_{d-1},0}(t_1,\dots ,t_{k-1})$.

Suppose that $d\geq 4$.
Let $T'$ be the hypergraph obtained from $T$ by moving edges $e_i^1,\dots,e_i^{t_i}$  from $w_i$ to $w_k$ for all $i$ with $1\leq i\leq k-1$ and $t_i\ge 1$.
Then $T'\cong G_{e_{s-1},k-1}(t_1, \dots, t_{k-1})\ncong S_{n,k}$. By Corollary~\ref{moving33}, $\rho (T)>\rho(T')$, a contradiction.
Thus $d=3$, implying that
$T=G_{e_2,0}(t_1,\dots, t_{k-1})$.

Suppose that $k\geq 3$ and  $t_i\geq 1$ for some $i$ with $1\leq i\leq k-2$.
Let $T''$ be the hypergraph obtained from $T$ by moving edges $e_i^1,\dots,e_i^{t_i}$ from $w_i$ to $w_k$ for all $i$ with $1\leq i\leq k-2$ and $t_i\ge 1$.
Then $T''\cong G_{e_2, k-2}(t_1, \dots,t_{k-1})\ncong S_{n,k}$.
By Corollary~\ref{moving33}, $\rho (T)>\rho(T'')$, a contradiction.
Thus $t_i=0$ for each $1\leq i\leq k-2$.
It follows that $T=G_{e_2,0}(0,\dots, 0, t_{k-1})\cong D_{n,a}$, where $a=\min \{t_{k-1}, \frac{n-k}{k-1}-t_{k-1}\}$ for $k\ge 3$.
Obviously, this is also true for $k=2$. 
By Lemma~\ref{hyper2}, we have $T\cong D_{n,1}$
\end{proof}

\section{Concluding remarks}

We propose three types of graft transformations for a $k$-uniform hypergraph, and  study the effect of them to increase or decrease the distance spectral radius. We show that $P_{n,k}$ and $S_{n,k}$ are  the unique  $k$-uniform hypertrees with  maximum and minimum distance spectral radius, respectively, and we also determine the unique  $k$-uniform hypertrees with second maximum and second minimum distance spectral radius, respectively. Besides, we determine the unique hypertree with maximum distance spectral radius among $k$-uniform hypertrees with given maximum degree.
Some theorems in this paper echo results on the distance spectral radius of ordinary graphs in the literature, see, e.g. \cite{SuPo90, SI,WZ,?}. However, there are differences between $k$-uniform hypergraphs with $k\ge 3$ and ordinary graphs ($k=2$). In Theorem \ref{max-hyper3}, we show that $F_{n,k}$ is the unique hypertree with second maximum distance spectral radius among $k$-uniform hypertrees of order $n$. It is easy to see that $F_{n,k}$ for $k\ge 3$ and $F_{n,2}$ have quite different structure.

\bigskip
%


\end{document}